\definecolor{labelkey}{rgb}{0,0,1}
\theoremstyle{plain}
\newtheorem{THEOREM}{Theorem}[section]
\newtheorem{theorem}[THEOREM]{Theorem}
\newtheorem{lemma}[THEOREM]{Lemma}
\theoremstyle{definition}
\theoremstyle{remark}
\newtheorem{remark}[THEOREM]{Remark}
\newtheorem{example}[THEOREM]{Example}
\newcommand{\thm}[1]{Theorem~\ref{#1}}
\newcommand{\lem}[1]{Lemma~\ref{#1}}
\newcommand{\rem}[1]{Remark~\ref{#1}}
\DeclareMathOperator{\Id}{Id} %
\def \a {\alpha}
\def \b {\beta}
\def \g {\gamma}
\def \d {\delta}
\def \e {\varepsilon}
\def \l {\lambda}
\def \n {\nabla}
\def \L {\Lambda}
\def \O {\Omega}
\def \cC {\mathcal{C}}
\def \cD {\mathcal{D}}
\def \cG {\mathcal{G}}
\def \cI {\mathcal{I}}
\def \cL {\mathcal{L}}
\def \cR {\mathcal{R}}
\def \cV {\mathcal{V}}
\newcommand{\R}{\ensuremath{\mathbb{R}}}   
\newcommand{\T}{\ensuremath{\mathbb{T}}}   
\def \sign {\mathrm{sgn}}
\def \one {{\mathds{1}}}
\def \p {\partial}
\def \ss {\subset}
\def \dx  {\, \mathrm{d}x}
\def \dt  {\, \mathrm{d}t}
\def \dm  {\, \mathrm{d}m}
\def \dy  {\, \mathrm{d}y}
\def \dr  {\, \mathrm{d}r}
\def \ds  {\, \mathrm{d}s}
\def \dv  {\, \mathrm{d}v}
\def \ddt  {\frac{\mathrm{d\,\,}}{\mathrm{d}t}}
\def \dd  {\mathrm{d}}
\begin{document}

\title{On Cucker-Smale dynamical systems with degenerate communication}

\author{Helge Dietert}

\address{CNRS, Sorbonne Universit\'e, Universit\'e Paris Diderot,
	Institut de Math\'ematiques de Jussieu-Paris Rive Gauche, IMJ-PRG,
	F-75013, Paris, France.}
\email{helge.dietert@imj-prg.fr}

\author{Roman Shvydkoy}
\address{Department of Mathematics, Statistics and Computer Science, University of Illinois at Chicago, 60607}
\email{shvydkoy@uic.edu}

\date{\today}

\subjclass{92D25, 35Q35, 76N10}

\keywords{flocking, alignment, emergence, Cucker-Smale}

\thanks{\textbf{Acknowledgment.} Research of RS is supported in part by NSF
	grants DMS 1515705, DMS-1813351 and Simons Foundation. He thanks Courant Institute of Mathematical Sciences for hospitality during the preparation of the paper.}

\begin{abstract}
  This note introduces a new
  method for establishing alignment in systems of collective behavior
  with degenerate communication protocol. The
    communication protocol consists of a kernel defining
    interaction between pairs of agents. Degeneracy presumes that the
    kernel vanishes in a region, which creates a zone of
    indifference. A motivating example is the case of a local
    kernel. Lapses in communication create a lack of coercivity in
    the energy estimates. Our approach is the construction of a
    corrector functional that compensates for this lack of
    coercivity. We obtain a series of new results:
  unconditional alignment for systems on $\R^d$ with degeneracy at
  close range and fat tail in the long range, and for systems on the
  circle with purely local kernels.  The results are proved in the
  context of both the agent based model and its hydrodynamic
  counterpart (Euler alignment model). The method covers bounded and
  singular communication kernels.
\end{abstract}

\maketitle

\tableofcontents
\section{Introduction}
We study long time behavior of Cucker-Smale type systems that describe evolution of a flock driven by laws of self-organization:
\begin{equation}\label{e:CS}
\left\{
\begin{aligned}
\dot{x}_i&=v_i,\\
\dot{v}_i& =\frac1N \sum_{j=1}^N \phi(|x_i - x_j|)(v_j-v_i),
\end{aligned}\right.
\qquad (x_i,v_i)\in \O^d \times \R^d,
\end{equation}
where  $\O^d$  is an environment domain of the flock. The system was  introduced by Cucker and Smale in \cite{CS2007a} to demonstrate a basic mechanism of \emph{alignment} in Galilean invariant settings, i.e. all $v_i \to \bar{v}$, where the limiting velocity is the total momentum  $\bar{v} = \frac1N \sum_{i=1}^{N} v_i$. The original result of Cucker and Smale \cite{CS2007a,CS2007b} (for more refined approach see Ha, Liu \cite{HL2009}, and Tadmor, Tan \cite{TT2014}) claims exponential convergence on $\O^d = \R^d$ for communication kernels satisfying three assumptions:
\begin{itemize}
	\item Monotonicity:  $\phi'(r) \leq 0$;
	\item Non-degeneracy: $\phi(r) >0$;
	\item Fat tail condition: $\int_{1}^\infty \phi(r) \dr = \infty$.
\end{itemize}
It is easy to see that the fat tail is generally a sharp
condition: consider two agents (\(N=2\)) with kernel $\phi(r) = \frac{1}{r^\b}$ for
$r>r_0$, and let $x = x_1 = - x_2>r_0$ and $v = v_1 = -v_2 >0$. This
symmetry is preserved in time. Then the system \eqref{e:CS} becomes
 \[
 \frac{\dx}{2^{\b} x^\b} + \frac{\dv }{1} = 0.
\]
 It admits the conservation law $K = v + \frac{1}{2^{\b}(1-\b) x^{\b-1}}$. If $\b>1$, then $K >0$ and $v(t) \geq c_0$ remains at all times if initially so. This creates permanent misalignment. Monotonicity, on the other hand, can be replaced by the lower bound $\phi \geq \Phi$, where $\Phi$ is decreasing and has fat tail. However, the non-degeneracy is not a trivial assumption to remove and it has been the subject of much of recent research on Cucker-Smale systems. Yet, degenerate kernels appear naturally in many situations. For example, in the swarming dynamics of animals or cell migration, only local communication is possible, $\phi \sim \l \one_{r<r_0}$. In the context of birds or fish motion, a zone of immediate proximity to the object is usually dominated by repulsion, and not alignment, mechanisms. Degenerate kernels appear in the context of a general class of systems with heterophilious communication studied extensively in \cite{MT2014}. In most cases,  emergence of organized behavior is still observed despite lack of global interactions. We refer to these surveys for detailed discussion  \cite{VZ2012,MT2014,Wa2018}.

 Mathematically, obtaining alignment in the case of $\O^d = \R^d$ requires additional assumptions, such as graph-connectivity of the flock \cite{JLM2003,MT2014} or strong local communication \cite{MPT}. Without such special assumptions it is easy to arrange for two agents at a distance $>r_0$ launched in the opposite directions, a configuration that results in permanent misalignment. Under more restrictive environmental settings, such as torus $\O^d = \T^d$, the dynamics is recursive and so, heuristically, one would expect to obtain more generic alignment outcomes. This was achieved for a large class of so-called topological kernels, see \cite{STtopo}, in the context of a macroscopic counterpart of \eqref{e:CS}, known as the  Euler-alignment system,
 \begin{equation}\label{e:CShydro}
 \left\{
 \begin{aligned}
 \p_t \rho +  \n \cdot (v \rho) & = 0\\
 \p_t {v} + v \cdot \n v & = \int_{\O^d} \phi(x-y)\,(v(y) - v(x)) \rho(y) \dy.
 \end{aligned}\right.
\end{equation}
Generally, however, no result for degenerate kernels without special preconditions is known.

In this paper we introduce a new method of proving alignment for degenerate kernels, and use it to obtain a series of new results.  Traditionally, one considers the functionals
\[
\cV_2(t) = \frac{1}{N^2} \sum_{i,j=1}^N |v_{i}(t) - v_j(t)|^2, \quad \cI_2(t) = \frac{2}{N^2} \sum_{i,j=1}^N |v_{i}(t) - v_j(t)|^2 \phi(|x_i - x_j|),
\]
which satisfy the energy equation $\ddt \cV_2 = - \cI_2$.  Clearly, lapses in communication void the usual coercivity estimate $\ddt \cV_2 \leq - c \cV_2$ even if the flock is known to be bounded. The new  idea is to restore communication in the missing range via construction of a corrector. With the use of the corrector we build a family of new Lyapunov functionals for the system -- interesting on its own right -- from which alignment is deduced by means of decay of, possibly, a higher order variation:
\[
\cV_p = \frac{1}{N^2} \sum_{i,j=1}^{N} |v_i - v_j|^p.
\]
In many cases the method gives rates independent of  the number of agents $N$, in which case it carries over to the hydrodynamic system \eqref{e:CShydro}.  Below we summarize the results proved in the paper.

First, we consider two general classes of kernels:  smooth $\phi \in C^1([0,\infty))$, or $\phi \in C^1((0,\infty))$ with non-integrable singularity at the origin:
\begin{equation}\label{e:fh}
\int_0^1 \phi(r) \dr = \infty.
\end{equation}
The significance of condition \eqref{e:fh} lies in the fact that such kernels prevent collisions between agents and consequently, the discrete system \eqref{e:CS} is well-posed even though the right hand side is not Lipschitz. This issue has received extensive treatment in works of Peszek et al  \cite{CCMP2017,MP2018, Pe2014,Pe2015}, and \cite{ST1,ST2,ST3,DKRT2018} for the Euler-alignment system. A quantitative expression of non-collision is an integral part of our approach, so we will revisit the question in Section~\ref{s:coll} below.  A special class of singular kernels satisfying \eqref{e:fh} is described by a power law communication satisfying
\begin{equation}\label{e:power}
\one_{r<r_0} 	\frac{\l}{r^{\b}} \leq \phi(r) \leq \frac{\L}{r^{\b}} , \quad \b\geq 1,\ r_0,\l,\L>0.
\end{equation}
The following is a direct generalization of the classical Cucker-Smale result on $\R^d$.
\begin{theorem}[Alignment on $\R^d$]\label{t:multiD} Suppose the kernel  $\phi \geq 0$ is either smooth or satisfying \eqref{e:fh}. Suppose also that it dominates a monotone fat tail: there exists a non-increasing $\Phi(r)$, such that for some $r_0>0$
	\begin{equation}\label{e:fatquant}
	\phi(r) \geq \Phi(r), \, \forall r>r_0, \quad \text{and} \quad	\int_{r_0}^\infty \Phi(r) \dr = \infty.
	\end{equation}
Then
\begin{itemize}
	\item[(i)] Any solution to the discrete system \eqref{e:CS} aligns: $\cV_2(t) \to 0$, with a rate possibly depending on $N$.
		\item[(ii)] If $\phi$ is smooth, then any solution to the discrete system \eqref{e:CS} aligns: $\cV_4 \to 0$, with a rate independent of $N$.
			\item[(iii)]  If $\phi$ is smooth, then any strong solution to the Euler alignment system  \eqref{e:CShydro} aligns:  $\cV_4 \to 0$.
\end{itemize}
\end{theorem}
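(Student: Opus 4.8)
The plan is to begin from the basic energy identity $\ddt \cV_2 = -\cI_2$, which shows that $\cV_2$ is non-increasing and that $\int_0^\infty \cI_2\dt < \infty$. By itself this only gives uniformly bounded velocities, not alignment: since $\phi$ may vanish on a zone of indifference, $\cI_2$ is not comparable to $\cV_2$ and the usual coercivity $\ddt \cV_2 \leq -c\,\cV_2$ breaks down. The first task is to confine the flock. Writing $d(t)$ for the diameter, I expect a fat-tail barrier argument to yield a uniform bound $d(t)\le D$: whenever $d(t)>r_0$ the extremal agents lie in the range where $\phi\ge\Phi$, so the restoring interaction is active exactly when the flock is large, and comparison with the coercive potential $\int_{r_0}^{d}\Phi(r)\dr$ prevents indefinite spreading. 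The close-range degeneracy does not obstruct this, since it only affects agents that are already mutually close. Consequently every pair at distance exceeding $r_0$ communicates with strength at least $\Phi(D)>0$ for all time.

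The heart of the matter is to recover coercivity on the degenerate pairs --- those within the zone of indifference --- through a corrector built from a virial-type cross term. For a bounded cutoff $\Psi$, positive on $[0,r_0]$ and vanishing beyond, I would set
\[
\cC(t) = \frac{1}{N^2}\sum_{i,j=1}^N \Psi(|x_i-x_j|)\,(x_i-x_j)\cdot(v_i-v_j),
\]
and exploit the elementary virial identity $\ddt\big[(x_i-x_j)\cdot(v_i-v_j)\big] = |v_i-v_j|^2 + (x_i-x_j)\cdot(\dot v_i-\dot v_j)$, whose leading term $|v_i-v_j|^2$ is exactly the coercive quantity the energy estimate is missing on that zone. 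Differentiating $\cC$ therefore reproduces $\frac{1}{N^2}\sum_{i,j}\Psi(|x_i-x_j|)|v_i-v_j|^2$, and I would form the Lyapunov functional $\cL=\cV_2-\e\,\cC$ for small $\e>0$: then $\ddt\cL=-\cI_2-\e\,\ddt\cC$, where the term $-\e\,\frac{1}{N^2}\sum_{i,j}\Psi|v_i-v_j|^2$ supplies dissipation on the close pairs while $\cI_2$ (with $\phi\ge\Phi(D)$ for $|x_i-x_j|>r_0$) supplies it on the far pairs, together covering all pairs. Since $|\cC|\lesssim\sqrt{\cV_2}$ on the bounded flock, $\cL$ stays bounded below, and the aim is the inequality $\ddt\cL\le -c\,\cV_2+(\text{integrable errors})$.

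The main obstacle is the control of the error terms generated by differentiating the corrector. Two contributions are dangerous: the interaction term $\frac{1}{N^2}\sum\Psi\,(x_i-x_j)\cdot(\dot v_i-\dot v_j)$, in which $\dot v_i-\dot v_j$ is the full weighted average of velocity differences, and the moving-cutoff term $\frac{1}{N^2}\sum\Psi'\,\frac{d}{dt}|x_i-x_j|\,(x_i-x_j)\cdot(v_i-v_j)$, which is cubic in velocity differences. On the confined flock the first is dominated using the uniform size of $\phi$ and the genuine dissipation, while the cubic term must be absorbed into a combination of $\cI_2$ and a small multiple of the coercive term by Young's inequality; the smallness of $\e$ is what keeps the net sign negative. (For singular kernels the close range is instead strongly coercive, and the non-collision estimate furnished by condition \eqref{e:fh}, revisited in Section~\ref{s:coll}, is what keeps these error terms finite.) Once $\ddt\cL\le -c\,\cV_2$ holds modulo integrable errors, the boundedness below of $\cL$, the finiteness of $\int_0^\infty\cV_2\dt$, and the uniform continuity of $\cV_2$ force $\cV_2\to0$, proving (i); the constant $c$ depends on $\Phi(D)$ and on the combinatorics of the degenerate pairs and may deteriorate with $N$.

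For (ii) and (iii) I would run the same virial mechanism at the level of the quartic variation, forming $\cL_4=\cV_4-\e\,\cC_4$ with an analogous corrector, since it is at this order that $N$-independent constants emerge. The cubic error from the moving cutoff, schematically of size $|v_i-v_j|^3$, is now balanced against $\cV_4$ through $|v_i-v_j|^3\lesssim|v_i-v_j|^4+|v_i-v_j|^2$, so that $\ddt\cL_4\le -c\,\cV_4$ with a rate depending only on $\Phi(D)$, $r_0$ and $\|\phi\|_{C^1}$ and not on the number of agents; the smoothness of $\phi$ enters precisely to bound $\Psi'$ and the interaction terms uniformly. For the Euler alignment system \eqref{e:CShydro} I would transcribe these functionals into their continuum analogues, replacing $\frac{1}{N^2}\sum_{i,j}$ by $\iint\rho(x)\rho(y)\dx\dy$ and repeating the virial computation; strong solutions provide the regularity needed to justify the manipulations, and the $N$-free estimate from (ii) carries over to give $\cV_4\to0$.
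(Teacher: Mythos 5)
Your core mechanism --- a virial-type corrector whose time derivative reproduces the missing dissipation $\frac{1}{N^2}\sum_{i,j}|v_{ij}|^2$ on pairs in the dead zone, added to the energy law to form a Lyapunov functional --- is indeed the paper's method (the paper's corrector $\cG=\frac{1}{N^2}\sum_{i,j}|v_{ij}|\,\psi(d_{ij})\,\chi(|x_{ij}|)$ is a bounded, saturating variant of your cross term, and its cubic analogue is used for (ii)--(iii)). But your argument rests on a step that is a genuine gap: the claim that a ``fat-tail barrier argument'' gives a uniform diameter bound $d(t)\le D$, so that every far pair communicates with strength at least $\Phi(D)>0$ and one may write $\ddt \cL \le -c\,\cV_2$ with a fixed constant $c$. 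The classical barrier argument (Cucker--Smale, Ha--Liu) requires the coercivity $\cI_2 \ge \phi(\cD)\,\cV_2$, i.e.\ a pointwise lower bound on $\phi_{ij}$ for \emph{every} pair in the flock; this is precisely what degeneracy destroys, since pairs inside the indifference zone contribute to $\cV_2$ but nothing to $\cI_2$. Your justification is circular: confinement would need the restored coercivity, while your restored coercivity constant $\Phi(D)$ presupposes confinement. The paper never proves confinement, and its conclusion ($\cV_2\to 0$ with no rate) is too weak to imply it a posteriori. Instead it uses only the trivial linear bound $\cD(t)\le ct+\cD_0$ coming from bounded velocities, derives $\int_0^\infty \Phi(\cD(t))\,\cV_2(t)\dt<\infty$ from the Lyapunov functional, and then exploits that $\Phi$ non-increasing with $\int_{r_0}^\infty\Phi(r)\dr=\infty$ forces $\int_0^\infty\Phi(ct+\cD_0)\dt=\infty$; since $\cV_2$ is monotone, it must tend to zero. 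Without this substitute for confinement, your differential inequality is unjustified and the proof does not close.

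A second, smaller gap concerns the interaction error of the corrector, $\frac{1}{N^2}\sum_{i,j}\Psi(|x_{ij}|)\,x_{ij}\cdot(\dot v_i-\dot v_j)$. Expanding $\dot v$, this term is of size $\cI_1=\frac{1}{N^2}\sum_{i,j}\phi_{ij}|v_{ij}|$, i.e.\ \emph{linear} in velocity differences, and it cannot be absorbed by Young's inequality into $\cI_2$ and the gain term: any such attempt leaves behind a constant of order $|\phi|_\infty$ or a term $\sqrt{\cI_2}$, neither of which is integrable in time. The paper's fix in part (i) is to add $b\,N\,\cV_1$ to the Lyapunov functional and use the $N$-dependent monotonicity $\ddt \cV_1 \le -\cI_1/N$; this is exactly where the $N$-dependent rate in (i) originates, and you offer no substitute. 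Your instinct is correct for (ii)--(iii): there the paper takes the cubic corrector $\frac{1}{N^2}\sum_{i,j}|v_{ij}|^3\psi(d_{ij})\chi(|x_{ij}|)$ so that all errors are at least quadratic in velocity differences and can be absorbed, via Young's inequality, into $\cI_2$ and a small multiple of the gain term, which is what produces $N$-independent constants. (Also note that your ``moving-cutoff'' error in the quadratic corrector is quadratic, not cubic, in velocity differences, and is harmless because $\Psi'$ is supported where $\phi\ge\Phi(2r_0)>0$, so it is dominated by $\cI_2$.)
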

In the hydrodynamic case (iii) we use the continuous version of $\cV_p$:
\begin{equation}\label{e:Vkhydro}
\cV_p(t) = \int_{\O^d \times \O^d} |v(x,t) - v(y,t)|^p \rho(x,t) \rho(y,t) \dx \dy.
\end{equation}
The advantage of (i) over (ii) is that it measures alignment in the weaker metric of $\cV_2$, which gives a better (though $N$-dependent) rate over $\cV_4$ in the cases when such rate can be quantified, see \rem{r:ratesopen}.

The next installment concerns local kernels in periodic settings. In multi-D solutions do not always align for degenerate kernels. Indeed, two agents that move parallel to each other  with different velocities, or two agents looping around the torus on perpendicular orbits with relatively rational velocities will not come closer to each other than a distance $r_0$. If that is the radius of communication, then the agents will not align.  All such examples represent measure-zero likelihood, and might be ruled out in various probabilistic settings. We plan to address this problem in another work. However, in 1D, misaligned agents inevitably come close to communicate, and hence the alignment is likely. This is the content of the next result.

\begin{theorem}[Alignment on $\T$]\label{t:T}
	Consider the system \eqref{e:CS} on $\T$ with a non-trivial non-negative kernel $\phi$ which is either smooth or satisfying \eqref{e:fh}. Then the following alignment  holds for any solution:
	\begin{itemize}
		\item[(i)]	$\cV_2(t)  \leq \frac{C N }{t}$, as  $t \to \infty$, where $C$ depends on the initial condition only.
		\item[(ii)] For sub-quadratic singularities
		\begin{equation}\label{e:sub2}
		 \sup_{r<r_0} r^2 \phi(r) <\infty
               \end{equation}
		one has
		\begin{equation}\label{e:V2decay}
		\cV_2(t) \leq C \frac{\ln t}{t}, \quad t \to \infty,
		\end{equation}
		where $C$ depends only on the initial condition.
		\item[(iii)] If the kernel satisfies the more singular assumption \eqref{e:power} for $\b>2$, then
		\begin{equation*}
		\cV_2(t) \to 0, \quad t \to \infty,
		\end{equation*}
		with a rate independent of $N$.
	\end{itemize}
In the cases $(ii)$ and $(iii)$ in the range $2<\b<3$, the result holds for any strong solution to the Euler-alignment system \eqref{e:CShydro} as well.
\end{theorem}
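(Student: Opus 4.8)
The plan is to turn the global-in-time integrability of the dissipation into genuine decay of $\cV_2$ by exploiting the \emph{recurrence} of relative trajectories on $\T$, using a virial-type corrector to supply the coercivity that the degenerate kernel fails to provide pointwise. By Galilean invariance I first normalize the conserved momentum to $\bar v=\frac1N\sum_i v_i=0$, so that $\cV_2=\frac2N\sum_i|v_i|^2$ and alignment means $v_i\to0$. Since the velocities remain in the convex hull of their initial values, the relative speeds $w_{ij}:=v_i-v_j$ stay bounded, $|w_{ij}|\le 2V_0$ with $V_0=\max_i|v_i(0)|$, uniformly in $N$. The energy identity $\ddt\cV_2=-\cI_2$ integrates to $\int_0^\infty\cI_2\,\dt\le\cV_2(0)<\infty$, and the entire difficulty is that degeneracy allows $\cI_2$ to be tiny while $\cV_2$ is not: two groups separated by more than $r_0$ dissipate nothing.

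To recover coercivity I would introduce, writing $d_{ij}:=x_i-x_j\in\T$ and letting $B:\T\to\R$ be the odd signed-geodesic coordinate, the corrector $\cC(t)=\frac1{N^2}\sum_{i,j}w_{ij}\,B(d_{ij})$, the circle analogue of $\frac1N\sum_i x_i v_i$. It is dominated by the energy, $|\cC|\le\frac L2\sqrt{\cV_2}$ with $L$ the circumference, and a direct differentiation gives $\ddt\cC=\cV_2+R+J$, where $R$ is the feedback from $\dot w_{ij}$, bounded by Cauchy--Schwarz as $|R|\le C\sqrt{\overline\phi}\,\sqrt{\cI_2}$ with $\overline\phi:=\frac1{N^2}\sum_{i,j}\phi(|x_i-x_j|)$, and $J\le0$ collects the downward jumps produced when a pair sweeps through the cut locus (the antipodal point). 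The term $\cV_2$ is exactly the coercivity we want, but the sign-indefinite cut-locus contribution $J$ both encodes and obstructs the recurrence. The heart of the argument is then an \emph{integrated} coercivity replacing the missing pointwise one: whenever $|w_{ij}|$ is bounded below, $d_{ij}$ winds around $\T$, and during each revolution it spends a definite fraction $\sim r_0/L$ of the winding---\emph{independent of the speed}---inside the communication zone $\{|d|<r_0\}$, where $\phi$ is bounded below on a fixed subinterval. Hence over a window containing one revolution, $\int w_{ij}^2\,\phi(|x_i-x_j|)\,\dt\gtrsim \int w_{ij}^2\,\dt$. Because $|\dot w_{ij}|\lesssim\overline\phi\,V_0$, the relative speed is nearly constant over a revolution precisely when $|w_{ij}|$ is not too small, so this comparison is quantitative on a window of length $\sim L/|w_{ij}|$ that lengthens as alignment improves; feeding $\int_0^\infty\cI_2<\infty$ through it forces $\cV_2\to0$, and tracking the per-revolution decay against the growing revolution time yields the algebraic rate, giving (i) with the crude constant $CN$.

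The sharper rates come from controlling $\overline\phi$ and the feedback $R$ near close encounters. For (ii) the subquadratic bound \eqref{e:sub2} keeps $\overline\phi$ and $R$ bounded even at near-collisions---here I would invoke the quantitative non-collision estimates of Section~\ref{s:coll}, available because \eqref{e:fh} holds---refining the bookkeeping above to the rate \eqref{e:V2decay}. For (iii), the stronger singularity $\beta>2$ in \eqref{e:power} supplies dissipation so strong near close encounters that the estimate closes with constants independent of $N$, whence $\cV_2\to0$ $N$-uniformly. Finally, since the estimates in (ii)/(iii) are $N$-free, for power-law kernels with $2<\beta<3$ they transfer to the Euler-alignment system \eqref{e:CShydro} after replacing the sums by the continuous pairings \eqref{e:Vkhydro}; the upper bound $\beta<3$ is what the singular-kernel well-posedness and non-collision theory requires for strong solutions, while $\beta>2$ is what the coercivity mechanism of the previous paragraph needs.

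I expect the genuine obstacle to be the recurrence coercivity together with the cut-locus term. On the compact $\T$ every smooth odd $B$ satisfies $\int_\T B'=0$, so no corrector can produce \emph{pointwise} coercivity in $\cV_2$; one is therefore forced to use the winding quantitatively while simultaneously absorbing the sign-indefinite jump term $J$ and, in the singular cases, the collisional feedback $\overline\phi$---all uniformly enough in the speed (and $N$-free in (iii)) to survive both the degradation of the revolution time as $\cV_2\to0$ and the passage to the hydrodynamic limit.
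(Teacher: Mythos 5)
Your diagnosis of the obstruction is exactly right: no corrector of the form $\frac{1}{N^2}\sum_{i,j} w_{ij}B(x_{ij})$ can work on $\T$, because the cut-locus jumps $J$ cancel, over each revolution, precisely the secular gain $\cV_2$ produced by $B'=1$. But your proposed way around it --- the windowed ``integrated coercivity'' over one revolution --- does not close. It is, in essence, the heuristic lapping argument (a fast agent must overtake a slow one), which the paper states explicitly and then discards because it is purely qualitative: the comparison $\int w_{ij}^2\phi\,\dt \gtrsim \int w_{ij}^2\,\dt$ over a window of length $\sim L/|w_{ij}|$ requires $w_{ij}$ to be nearly constant there, i.e. $\overline{\phi}\, V_0 L/|w_{ij}| \ll |w_{ij}|$, and this hypothesis degenerates exactly in the regime where you need it, as $\cV_2\to 0$; moreover different pairs wind at incommensurate and vanishing speeds, so no single window sees all pairs complete a revolution. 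Nothing in the proposal converts the per-revolution bookkeeping into the stated rates $CN/t$ and $C\ln t/t$; you assert the conversion but give no mechanism. There is also a concrete error in your treatment of (ii): under \eqref{e:sub2} the quantity $\overline{\phi}$ is \emph{not} bounded near close encounters ($\phi(r)\sim r^{-2}$ is allowed, and the non-collision estimates of Section~\ref{s:coll} give only $|x_{ij}|\gtrsim e^{-C\sqrt{t}}$ when $\b=2$), so your bound $|R|\le C\sqrt{\overline{\phi}}\sqrt{\cI_2}$ cannot be integrated in time.

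The fix is to change the corrector, not the bookkeeping. The paper takes $\cG=\frac{1}{N^2}\sum_{i,j}|v_{ij}|\,\psi(d_{ij})$, with two crucial modifications of your $w_{ij}B(x_{ij})$. First, the argument is the \emph{directed} distance $d_{ij}=-x_{ij}\,\sign(v_{ij}) \mod 2\pi$, the arc length to the upcoming encounter; it obeys $\ddt d_{ij}=-|v_{ij}|$, and composing it with a \emph{periodic} $\psi$ kills the jump problem entirely: crossings $x_i=x_j$ cost nothing by periodicity, and sign changes of $v_{ij}$ cost nothing because of the prefactor $|v_{ij}|$. Second, $\psi$ is not odd-linear but a sawtooth whose slope is $-1$ exactly over the interval where $\phi\ge\l$ and $+\frac{r_0}{\pi-r_0}$ elsewhere; then $-\frac{1}{N^2}\sum_{i,j}|v_{ij}|^2\psi'(d_{ij})$ splits into a positive part supported in the communication zone, absorbed into $a\cI_2$ by the kernel lower bound, and a full $-b\cV_2$. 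This is how the paper circumvents your ``$\int_\T B'=0$'' obstruction: the mean-zero slope is spent where dissipation pays for it, yielding the pointwise differential inequality $\ddt\cG\le a\cI_2-b\cV_2+\cR$. The rates then come from time-weighted Lyapunov functionals, $\cL=\cG+bt\cV_2+a\cV_2\,(+\tfrac{cN}{2}\cV_1)$, so that $\ddt\cL\le 0$ in case (i), giving $\cV_2\le CN/t$. For (ii) and (iii) the remainder is symmetrized so that the Lipschitz bound $|\psi(d_{ij})-\psi(d_{kj})|\le C|x_{ik}|$ produces the extra factor $|x_{ik}|$ that tames the singularity: $\cR\lesssim\frac{1}{N^2}\sum_{i,k}|x_{ik}||v_{ik}|\phi_{ik}\le \frac{c}{t}+bt\cI_2$ under \eqref{e:sub2}, while for $\b>2$ one invokes the collision potential \eqref{e:C} and its growth bound \eqref{e:Cbound}. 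These are the steps your proposal is missing, and without them the claimed rates are unsupported.
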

In the hydrodynamic settings the alignment is measured in $L^2$-metric, and as such does not imply absolute alignment in $L^\infty$ norm. The letter, however, is natural due to the maximum principle which at least shows that  the amplitude $(\max v - \min v)$ is non-increasing.  We provide a separate argument to address alignment in $L^\infty$ settings, based on dynamical tracking of particle trajectories.
\begin{theorem}\label{t:1DTcont}
	Consider the system \eqref{e:CShydro} on $\T$ with a smooth non-trivial non-negative kernel. Then any global classical solution aligns:
	\begin{equation}
	\sup_{x}|v(x,t) - \bar{v}| \leq  C \left(  \frac{\ln t}{t}\right)^{\frac15},
	\end{equation}
	where
	\[
	\bar{v} =\frac{1}{M} \int_\T v \rho \dx, \quad    M = \int_\T \rho(x,t) \dx.
	\]

\end{theorem}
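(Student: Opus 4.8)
The plan is to bootstrap the $L^2$-type alignment already furnished by \thm{t:T} to the pointwise bound by following the characteristic that carries the extremal velocity. A smooth kernel is bounded, hence trivially satisfies \eqref{e:sub2}, and the corrector estimate behind \thm{t:T}(ii) has $N$-independent constants and so passes to the Euler-alignment system; I therefore take as input $\cV_2(t)\le C\ln t/t$. Writing $\cV_2=2M\int_\T (v-\bar v)^2\rho\dx$, this is a bound on the $\rho$-weighted variance, $\int_\T(v-\bar v)^2\rho\dx\le \frac{C\ln t}{2Mt}$. First I would reduce the claim to a scalar statement: momentum conservation keeps $\bar v$ constant, and the maximum principle recalled above makes $V_+(t):=\max_x v$ non-increasing and $V_-(t):=\min_x v$ non-decreasing (whence the amplitude is non-increasing). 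Thus the one-sided gaps $p(t):=V_+(t)-\bar v$ and $q(t):=\bar v-V_-(t)$ are non-negative and non-increasing, with $\sup_x|v-\bar v|=\max\{p,q\}$, so it suffices to prove $p(t)\le C(\ln t/t)^{1/5}$ and, by applying everything to $-v$, the same for $q$. A first consequence of the variance bound is that extremal agents are rare: by Chebyshev the $\rho$-mass of agents with $v-\bar v>p/2$ is at most $2\cV_2/(Mp^2)\lesssim \ln t/(p^2 t)$.

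The mechanism I would exploit is the compactness of $\T$. Let $X(t)$ be a point where $v(\cdot,t)$ attains its maximum; where the maximizer is unique it is advected by the flow $\dot X=v(X,t)$ (possibly through vacuum, where the alignment force still acts), and by Danskin's formula, since $\p_x v(X,t)=0$,
\[
\ddt V_+(t)=\int_\T \phi(x-y)\big|_{x=X}\,(v(y)-V_+)\,\rho(y)\dy\le 0,
\]
a genuine deceleration because $v(y)\le V_+$. Since the bulk of the mass sits within $O(\sqrt{\ln t/t})$ of $\bar v$ while $X$ advances at speed $\ge \bar v+p/2$, on the circle $X$ overtakes essentially all of the mass in one revolution, a time of order $1/p$; in 1D the overtaken agents cannot slip past $X$, so each is forced, at some moment of the revolution, to lie within the communication range of $X$. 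The heart of the argument is to convert this forced proximity into a quantitative per-revolution drop of $V_+$, i.e. into a differential inequality $\dot p\le-\Psi\big(p,\cV_2(t)\big)$, and then integrate it.

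The hard part is the loss of coercivity created by a thin, low-density halo of nearly-extremal agents that can screen $X$ within the communication radius: the agents that $X$ actually talks to then carry only a fraction of the deficit $p$, and—because the halo has $\rho$-mass only $O(\cV_2/p^2)$—the quadratic dissipation $\cI_2$, weighted by $\rho(x)\rho(y)$, is essentially blind to it. This is precisely why the energy method behind \thm{t:T} yields only $L^2$ control and a trajectory argument is unavoidable. To close the estimate I would bound the deficit seen across the positivity interval $\{\phi\ge c_0>0\}$ in terms of the halo width, use the Chebyshev mass bound $O(\cV_2/p^2)$ to constrain that width and density, and balance these against the revolution time $\sim 1/p$. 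Optimizing the competing powers of $p$ and $\cV_2(t)\sim\ln t/t$ is what produces the fractional exponent: the balance point has the form $p^{5}\sim\cV_2(t)$, giving $p(t)\lesssim(\ln t/t)^{1/5}$, and the symmetric computation controls $q(t)$. I expect the genuinely delicate points to be (a) making the sweeping lower bound robust when the maximizer jumps between far-apart locations—monotonicity of $V_+$ from the maximum principle keeps the scheme consistent, but the instantaneous deceleration must be handled with care—and (b) ruling out that the screening halo reorganizes quickly enough to defeat the per-revolution decrement.
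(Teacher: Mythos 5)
Your starting point is legitimate and coincides with the paper's: the hydrodynamic version of \thm{t:T}(ii) (a smooth kernel is bounded, so \eqref{e:sub2} holds) supplies $\cV_2(t)\le C\ln t/t$, the reduction to the one-sided gaps $p,q$ via conservation of momentum and the maximum principle is fine, and you correctly sense that the argument must be Lagrangian and must exploit the fact that characteristics of a classical solution on $\T$ cannot cross. However, the heart of your plan --- converting ``forced proximity'' into a quantitative per-revolution decrement $\dot p\le-\Psi(p,\cV_2)$ --- is exactly the step you leave open, and the tools you name cannot close it. Chebyshev controls only the $\rho$-mass of the near-extremal set, not its spatial width or the velocity profile inside it: a halo of arbitrarily small mass (or even a vacuum region carrying a fast velocity field) can occupy the entire communication range around the maximizer $X$, so the mass that $X$ actually talks to can have deficit $\ll p$ while the bulk stays outside the range of the (merely non-trivial, possibly compactly supported) kernel; nothing in a mass bound forces bulk mass to enter the range of $X$ at a definite rate. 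Moreover, the non-crossing mechanism needs two characteristics with a \emph{sustained} velocity gap, and your scheme provides no stability of velocities along characteristics: the maximizing characteristic need not stay fast once it decelerates (the max jumps elsewhere), and the ``bulk'' agents identified by Chebyshev at one instant need not stay slow (they can be accelerated by faster neighbors). Without a device guaranteeing that some comparison agents retain their velocity over a time scale $\sim 1/p$, the crossing argument cannot be run quantitatively. Finally, the balance $p^5\sim\cV_2(t)$ is asserted, not derived: it is the answer, not a computation.

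The paper resolves both difficulties at once with a construction your proposal lacks: a good set based on the \emph{time-integrated, per-particle} dissipation. Setting $F(\a,T)=\int_T^\infty\int_\T \phi_{\a\b}\,|u_{\a\b}|^2\dm_0(\b)\dt$ and $G_\d(T)=\{\a: F(\a,T)\le\d\}$, Chebyshev gives $m_0(G_\d^c)\le\e/\d$ with $\e=C\ln T/T$, and membership in $G_\d$ provides precisely the missing velocity stability, $|u(\a,t)-u(\a,T)|\lesssim\sqrt{\d}\,\sqrt{t-T}$ by Cauchy--Schwarz against $F$. This makes the sweeping/non-crossing argument work pairwise on $G_\d$ (\lem{l:onG}), with no reference to maximizers. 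Then, instead of tracking the maximum, an \emph{arbitrary} point $\a$ is compared to a good point $\g$ through the momentum equation: the force equals $(\phi*\rho)\,u_{\g\a}$ plus an error controlled by good-set alignment plus bad-set mass, $\lesssim \d^{2/3}+\e/\d$, so no geometric information about any halo is needed; a backward-in-time crossing argument (\lem{l:outG}) then bounds $|u_{\a\g}|$, and optimizing $\d=\e^{3/5}$ produces the exponent $1/5$. In short, your proposal shares the correct inputs and the correct underlying mechanism, but the key construction --- the functional $F$ and the good/bad decomposition it induces --- is missing, and without it the screening and stability obstructions you yourself identify remain unresolved.
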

Note that no connectivity of the flock is assumed in either of the theorems, i.e. the solution may have vacuum initially. This improves in 1D case the result obtained for topological kernels \cite{STtopo} where non-vacuous datum is essential.

In this paper we completely omit discussion of regularity of the hydrodynamic system \eqref{e:CShydro}. All our results pertaining to \eqref{e:CShydro} hold \emph{for a given strong solution}. We note, however, that in a variety of situations, both for smooth and singular kernels, such solutions have been constructed, see \cite{CCTT2016,DKRT2018,ST1,ST2,ST3,TT2014,Tan2017}. In particular, the 1D case is completely understood,  and small initial data results are known in multi-D case, \cite{DMPW2018,HKK2014,HeT2017,TT2014,S-sid}.

\section{Preliminaries}

In what follows we use the following  abbreviations:
\[
v_{ij} = v_i - v_j,\, x_{ij} = x_i - x_j,\, \phi_{ij} = \phi(x_{ij}),\quad  \text{etc.}
\]
Let us define the following variation functionals
\[
\cV_p = \frac{1}{N^2} \sum_{i,j=1}^{N} |v_{ij}|^p, \quad \cI_p = \frac{p}{N^2} \sum_{i,j=1}^{N} |v_{ij}|^p \phi_{ij}, \quad  p\geq 1\added{.}
\]
We observe that $\cV_p$'s are  non-increasing. Indeed,
\[
\begin{split}
\ddt \cV_p &=  \frac{p}{N^3} \sum_{i,j,k}  |v_{ij}|^{p-2} v_{ij} \cdot (v_{ki}  \phi_{ki}  - v_{kj}  \phi_{kj}) =  \frac{2p}{N^3} \sum_{i,j,k}  |v_{ij}|^{p-2} v_{ij} \cdot v_{ki}  \phi_{ki}  \\
& =  \frac{p}{N^3} \sum_{i,j,k} ( |v_{ij}|^{p-2} v_{ij} -  |v_{kj}|^{p-2} v_{kj}) \cdot v_{ki}  \phi_{ki} =  \frac{p}{N^3} \sum_{i,j,k} ( |v_{ij}|^{p-2} v_{ij} -  |v_{kj}|^{p-2} v_{kj}) \cdot (v_{kj} - v_{ij})  \phi_{ki},
\end{split}
\]
with the convention that $ |v_{ij}|^{p-2} v_{ij}  = 0$ if $i=j$. The right hand side is non-positive due to the elementary inequality
\[
\left(|a|^{p-2} a -  |b|^{p-2} b \right)  \cdot (a -b)  \geq 0.
\]
The two special cases, $i=j$ and $k=j$, produce the term $- |v_{ik}|^p -|v_{ij}|^p$. So, in general we have an $N$-dependent inequality
\begin{equation}\label{e:Vp}
\ddt \cV_p \leq  - \frac{1}{N} \cI_p, \quad p\geq 1.
\end{equation}
For the case $p=2$ we have the $N$-independent identity (energy law):
\begin{equation}\label{e:V2}
\ddt \cV_2 = -  \cI_2.
\end{equation}

\subsection{Collisions under singular communication} \label{s:coll}The calculations above are valid for smooth kernels or when no collisions are present. In this section we will collect all necessary facts about the collision issue.  We start with two short examples to illuminate the basic mechanism (see also \cite{Pe2014}).

\begin{example}
	First, in the smooth case let us assume $\phi = 1$ in a neighborhood of $0$ for simplicity. Let us arrange two agents $x= x_1 = -x_2$ with $0< x(0) = \e \ll1$. And let $v_1 = -v_2 < 0$ be very large. Clearly $x(t)$ will remain in the same neighborhood of $0$ as where it has started, and so the system reads
	\[
	\ddt x = v, \quad \ddt v = -2v.
	\]
	Solving it explicitly we can see that the two agents will collide at the origin. In the smooth kernel case such hard interactions are common but present no difficulty for well-posedness simply because the right hand side of \eqref{e:CS} remains smooth.
\end{example}
\begin{example}
	 Singularity, obviously, presents a problem if agents collide. However if it is sufficiently strong the agents align faster than a collision has a chance to happen.  Just how much singularity is necessary  can be seen from an example similar to the one given in the Introduction.  Let the kernel be given by $\phi(r) = \frac{1}{r^\b}$, and let us consider the same setup as previously. Then we obtain the system
	\[
	\ddt x = v, \quad \ddt v = -\frac{v}{2^\b x^{\b}}.
      \]
	This system has a conservation law:  $v + \frac{x^{1-\b}}{(1-\b) 2^\b} = K$. So, provided $\b<1$, and initially $K \ll 0$, then $v< K \ll 0$ as well. This means that $x$ will reach the origin in finite time.
\end{example}

\begin{theorem}
	Under the strong singularity condition \eqref{e:fh} the flock experiences no collisions between agents for any non-collisional initial datum on $\R^d$ or $\T^d$. Consequently, any non-collisional initial datum gives rise to a unique global solution.
\end{theorem}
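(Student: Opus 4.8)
The plan is to build a single Lyapunov functional that \emph{couples} the first-order variation $\cV_1$ with a primitive of the kernel, tuned so that the singular interaction terms cancel exactly. First I would record the a priori velocity bound: since each equation $\dot v_i = \frac1N\sum_j \phi_{ij}(v_j-v_i)$ in \eqref{e:CS} drifts $v_i$ toward a convex average of the other velocities, the convex hull $\conv\{v_i(t)\}$ only shrinks, so the velocity diameter is non-increasing and $|v_{ij}(t)|\le D_0:=\diam\{v_i(0)\}$ for all $t$; this holds verbatim on $\R^d$ and on $\T^d$. Consequently positions spread at most linearly, and as long as the flock is non-collisional the right-hand side of \eqref{e:CS} is locally Lipschitz (we have $\phi\in C^1((0,\infty))$). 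Thus there is a maximal interval $[0,T^*)$ carrying a unique regular solution, and it suffices to show that the gaps $d_{ij}:=|x_{ij}|$ stay bounded away from $0$ on $[0,T^*)$.

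Next I would introduce the primitive $\Psi(r)=\int_r^1 \phi(s)\ds$ for $0<r\le1$ and $\Psi(r)=0$ for $r\ge1$. By the non-integrability \eqref{e:fh}, $\Psi$ is non-negative, non-increasing, with $\Psi(r)\to+\infty$ as $r\to0^+$, and $\Psi'(r)=-\phi(r)$ on $(0,1)$. Differentiating along the flow and using $\ddt d_{ij} = \frac{x_{ij}}{|x_{ij}|}\cdot v_{ij}$ gives $\ddt\Psi(d_{ij}) = -\phi_{ij}\,\frac{x_{ij}}{|x_{ij}|}\cdot v_{ij}$, whence, since $|\frac{x_{ij}}{|x_{ij}|}\cdot v_{ij}|\le|v_{ij}|$,
\[
\ddt\left[\frac{1}{N^2}\sum_{i,j=1}^N \Psi(d_{ij})\right]\ \le\ \frac{1}{N^2}\sum_{i,j=1}^N \phi_{ij}\,|v_{ij}|\ =\ \cI_1 .
\]
The reason to work with $\cV_1$ rather than $\cV_2$ is precisely that its dissipation is \emph{linear} in $|v_{ij}|$ and matches this growth: by \eqref{e:Vp} with $p=1$ one has $\ddt\cV_1 \le -\frac1N\cI_1$.

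The crux is the choice of weight. Setting
\[
\cG(t)\ =\ \cV_1(t) + \frac{1}{N^3}\sum_{i,j=1}^N \Psi(d_{ij}(t)),
\]
the two estimates combine into $\ddt\cG \le -\frac1N\cI_1 + \frac1N\cI_1 = 0$: the singular contributions annihilate each other thanks to the common factor $\frac1N$. Hence $\cG$ is non-increasing on $[0,T^*)$, so every pair satisfies $\Psi(d_{ij}(t))\le N^3\,\cG(0)$, a finite bound because the datum is non-collisional (all $d_{ij}(0)>0$, so $\sum\Psi(d_{ij}(0))<\infty$). As $\Psi$ is a decreasing bijection near the origin, this gives a \emph{uniform} lower bound $d_{ij}(t)\ge d_*:=\Psi^{-1}(N^3\cG(0))>0$.

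Finally I would close by continuation: on $[0,T^*)$ the velocities are bounded and all gaps exceed $d_*$, so the trajectory stays in a compact non-collisional region where the vector field is Lipschitz; were $T^*<\infty$ the solution would extend past $T^*$, contradicting maximality. Therefore $T^*=\infty$, no collision ever occurs, and local Lipschitz continuity yields uniqueness. The same argument runs on $\T^d$ with $d_{ij}$ the geodesic distance, since only small gaps matter and there the torus and Euclidean distances coincide. The main obstacle is conceptual rather than computational, namely identifying the functional whose primitive weight is calibrated to cancel the singular terms; once $\cV_1$ (not $\cV_2$) and the normalization $N^{-3}$ are singled out, the inequality is immediate from \eqref{e:Vp}.
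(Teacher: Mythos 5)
Your proof is correct, but it follows a genuinely different route from the paper's. The paper argues by contradiction at the first collision time $T^*$: it isolates a colliding cluster $I^*$, introduces the local quantities $\cD^* = \max_{i,j\in I^*}|x_{ij}|$ and $\cV^* = \sum_{i,j\in I^*}|v_{ij}|^2$, and shows that the cluster energy $E = \sqrt{\cV^*} + C_2\int_{\cD^*}^1\phi(r)\dr$ satisfies $\ddt E\le C_1$, where $C_1$ controls interactions with the well-separated outside agents, so $E$ stays finite up to $T^*$ and, by \eqref{e:fh}, $\cD^*$ cannot vanish --- an adaptation of \cite{CCMP2017}. You instead exploit the $N$-dependent dissipation inequality \eqref{e:Vp} with $p=1$ to build a single globally monotone Lyapunov functional $\cG=\cV_1+N^{-3}\sum_{i\ne j}\Psi(|x_{ij}|)$, with no contradiction setup and no cluster decomposition; this is very much in the spirit of the paper's own corrector-functional philosophy, and it buys something the paper's proof does not give: a lower bound on \emph{all} pairwise distances that is \emph{uniform in time}, $\Psi(|x_{ij}(t)|)\le N^3\cG(0)$, whereas the paper's energy bound grows linearly in $t$ and only rules out collisions in finite time. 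The trade-off is the $N^3$ factor, which makes your minimal-distance bound degenerate in the mean-field limit --- in contrast with the collision potential $\cC$ of \eqref{e:C}, which the paper calibrates to survive $N\to\infty$ for $\b<3$ --- but this is irrelevant for the theorem as stated, which concerns fixed $N$. Three small repairs to your write-up: the sums over $\Psi$ must exclude the diagonal $i=j$ (otherwise they contain $\Psi(0)=+\infty$; the same convention is implicit in \eqref{e:C}); a degenerate kernel may vanish on subintervals, so $\Psi$ need not be injective --- define $d_*=\inf\{r:\Psi(r)\le N^3\cG(0)\}$, which is positive because $\Psi(r)\to\infty$ as $r\to0^+$, rather than invoking $\Psi^{-1}$; and to justify differentiating $\Psi(|x_{ij}(t)|)$ and $|v_{ij}(t)|$ (both have kinks), work on an arbitrary compact subinterval of $[0,T^*)$, where distances have a positive minimum and all quantities are Lipschitz, integrate the a.e.\ differential inequality there, and then exhaust $[0,T^*)$; this removes any apparent circularity.
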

\begin{proof} We start as in \cite{CCMP2017}. 	Let us assume that for a given non-collisional initial condition $(x_{i}, v_{ i} )_{i}$ a collision occurs at time $T^*$ for the first time. Let $I^* \ss I= \{1,...,N\}$ be one set of indexes of the  agents that collided at one point (note that other groups of agents may collide as well at other points of space). Hence, there exists a $\d>0$ such that $|x_{ik}(t)| \geq \d$ for all $i \in I^*$ and $k \in \O \backslash   I^*$. Denote
	\[
	\cD^* (t) =  \max_{i,j \in I^*} |x_{ij}(t)|, \quad \cV^*(t) = \sum_{i,j \in I^*} |v_{ij}(t)|^2 .
	\]
	Directly from the characteristic equation we obtain $| \dot{\cD}^* | \leq \sqrt{\cV^*}$, and hence
	\begin{equation}
	- \dot{\cD}^* \leq \sqrt{ \cV^*}.
	\end{equation}
	From the momentum equation we obtain
\[
	\ddt \cV^* = \frac{2}{N} \sum_{k \in I, i,j\in I^*} \phi_{ik} v_{ k i} v_{ ij}  - \phi_{kj}v_{ kj} v_{ ij} .
\]
	Switching $i,j$ in the second sum results in the same as first sum. So, we obtain
	\[
	\ddt  \cV^*  \leq \frac{4}{N} \sum_{k \in I, i,j\in I^*} \phi_{ki} v_{ ki} v_{ij} =  \frac{4}{N} \sum_{k \in I \backslash  I^*, i,j\in I^*} \ldots +  \frac{4}{N}\sum_{k, i,j\in I^*} \ldots
	\]
	Note that in the first sum the agents are separated, and all velocities are bounded by the maximum principle. So, we can estimate it by $C_1 \sqrt{\cV^*}$. Symmetrizing the second sum on $k,i$ we obtain
	\[
	\begin{split}
	\ddt  \cV^*& \leq C_1 \sqrt{\cV^*}+ \frac2N \sum_{k, i,j\in I^*} \phi_{ki}v_{ ki} v_{ ik}  \leq   C_1 \sqrt{\cV^*}- C_2 \phi(\cD^*)  \cV^*.
	\end{split}
	\]
        We thus obtain a system
	\begin{equation}
	\left\{
	\begin{aligned}
	\ddt \sqrt{\cV^*}& \leq  - C_2 \phi(\cD^*) \sqrt{\cV^*}+ C_1\\
	- \ddt{\cD}^* &\leq  \sqrt{\cV^*} .
	\end{aligned}\right.
	\end{equation}
	Let us denote the energy functional
	\[
	E(t) = \sqrt{\cV^*(t)} + C_2 \int_{\cD^*(t)}^1 \phi(r) \dr.
	\]
	We readily find that $\ddt E \leq C_1$, hence $E$ remains bounded up to the critical time. This means that $\cD^*(t)$ cannot approach zero value.

	The global existence part is now a routine application of the Picard iteration and the standard continuation argument.
\end{proof}

A quantitative version on the minimal distance between agents was obtained in \cite{CCMP2017} in the case of power kernels \eqref{e:power} with $\b \geq 2$, namely $|x_{ij}(t)| \geq c e^{-Ct}$. We can considerably improve this bound in the course of the computation below that will be needed in the proof of \thm{t:T}.

So, let us consider  the local version of the collision functional introduced in \cite{CCMP2017} for $\b\geq 2$:
\begin{equation}\label{e:C}
\cC =
\begin{dcases}
\frac{1}{N^2} \sum_{i,j=1}^{N} \frac{1}{(|x_{ij}|\wedge r_0)^{\b-2}}, & \quad \b >2 \\
\frac{1}{N^2} \sum_{i,j=1}^{N} \ln(|x_{ij}|\wedge r_0), & \quad \b =2.
\end{dcases}
\end{equation}
Note that as long as \(b<3\) this will stay finite in the limit \(N\to\infty\) for a smooth
distribution. Let us now look at the time-derivative of $\cC$ for $\b > 2$:
\begin{equation*}
\begin{aligned}
\frac{\dd \cC}{\dd t}
&= \frac{(2-\b)}{N^2} \sum_{i,j=1}^{N}
\frac{\ddt
	\Big(|x_{ij}|\wedge r_0\Big) }{(|x_{ij}|\wedge r_0)^{\b-1}} \le
\frac{|\b-2|}{N^2} \sum_{i,j=1}^{N}
\,\frac{1}{(|x_{ij}|\wedge r_0)^{\b-1}}
\,|v_{ij}|\, \one_{|x_{ij}|<r_0}\\
&\le |\b-2|\,
\left(
\frac{1}{N^2}
\sum_{i,j=1}^{N}
v_{ij}^2 \frac{1}{|x_{ij}|^\b}\, \one_{|x_{ij}|<r_0}
\right)^{1/2}
\left(
\frac{1}{N^2}
\sum_{i,j=1}^{N}
|x_{ij}|^{2-\b}\, \one_{|x_{ij}|<r_0}
\right)^{1/2}\\
&\le C
\sqrt{\cI_2}\, \sqrt{\cC}.
\end{aligned}
\end{equation*}
This implies
\begin{equation}\label{e:Cbound}
\sqrt{\cC(t)}
\le \sqrt{\cC(0)} + C \int_0^t \sqrt{\cI_2(s)} \ds,
\end{equation}
and recalling  that $\cI_2$ is integrable on $\R^+$ we conclude
\begin{equation}\label{e:linearC}
\cC(t) \lesssim t.
\end{equation}
For $\b=2$, exact same computation gives $\ddt \cC \leq C \sqrt{\cI_2}$, hence $\cC(t) \lesssim \sqrt{t}$. We thus arrive at the following bounds
\begin{equation}
	|x_{ij}(t)| \geq
	\begin{dcases}
	\frac{c}{t^{\frac{1}{\b-2}}}, & \quad \b>2, \\
	c e^{-C \sqrt{t}}, & \quad \b=2.
	\end{dcases}
\end{equation}

\subsection{Lagrangian coordinates in hydrodynamic context}

In the hydrodynamic context of  \eqref{e:CShydro} we express the variation functionals in terms of the density measure
\[
\dm_t = \rho(x,t) \dx.
\]
In view of the continuity equation, this measure is transported along the flow of $v$. Namely, letting
\[
\begin{split}
\ddt x(\a,t) & = v(x(\a,t),t), \quad t>0 \\
x(\a,0 ) & = \a,
\end{split}
\]
the measure $\dm_t$ becomes a push-forward  of $\dm_0$ under the flow map $x(\cdot,t)$:
$\dm_t = x(\cdot,t) \# \dm_0$. In other words, for any $f$,
\begin{equation}
\int_{\O^d}f( x(\a,t)) \dm_0(\a) = \int_{\O^d}f( x) \dm_t(x) .
\end{equation}
We now define the hydrodynamic variations
\begin{equation}\label{e:VpIp}
\begin{split}
\cV_p(t) & = \int_{\O^d \times \O^d} |v(x,t) - v(y,t)|^p\rho(x,t) \rho(y,t) \dx \dy \\
\cI_p(t) & = p\int_{\O^d \times \O^d} |v(x,t) - v(y,t)|^p \rho(x,t) \rho(y,t) \phi(x-y) \dx \dy.
\end{split}
\end{equation}
As before, we have the energy law
\[
\ddt \cV_2  = - \cI_2,
\]
and monotonicity of all other functionals $\ddt \cV_p \leq 0$.

We will work in Lagrangian coordinates in what follows and use the short notation
\[
u(\a,t) = v(x(\a,t),t), \quad u(\a,t) - u(\b,t) = u_{\a\b}, \quad x(\a,t) - x(\b,t) = x_{\a\b}, \quad \phi(x(\a,t) - x(\b,t)) = \phi_{\a\b},
\]
and $\dm_0(\a,\b) = \dm_0(\a) \dm_0(\b)$ to denote  the product measure. Thus,
\[
\cV_p(t) = \int_{\O^d \times \O^d} |u_{\a\b}|^p  \dm_0(\a,\b).
\]

\section{Alignment  on $\R^d$}\label{s:multiD}

We first demonstrate our method in the context of the multidimensional open environment $\O^d = \R^d$.  Let us note that the alignment process may in fact be very slow. If all initial differences $x_{ij}$ fall in the indifference zone $r<r_0$, the dispersion itself undergoes relaxation on a linear time scale.  So, we do not expect a fast rate in general.

\begin{proof}[Proof of \thm{t:multiD} (i)]
	Let us consider the following  ``distance" -- the projection of the displacement $x_{ij}$ onto the relative velocity direction taken with opposite sign:
	\begin{equation}\label{e:dmultiD}
	d_{ij} =  - x_{ij} \cdot \frac{v_{ij}}{|v_{ij}|}.
	\end{equation}
Next, define two auxiliary functions \(\chi : \R^+ \mapsto \R^+\) and
	\(\psi : \R \mapsto \R^+\) by
	\begin{equation*}
	\chi(r) =
	\begin{dcases}
	1,& r < r_0, \\
	2-\frac{r}{r_0},& r_0 \le r \le 2r_0, \\
	0,& r > 2r_0
	\end{dcases}
	\quad \text{and} \quad
	\psi(x) =
	\begin{dcases}
	0, & x<-r_0,\\
	x+r_0, & |x|\le r_0, \\
	2r_0, & x>r_0.
	\end{dcases}
	\end{equation*}
	We consider the following corrector function with spacial
        truncation
	\[
	\cG = \frac{1}{N^2} \sum_{i,j} |v_{ij}| \psi(d_{ij}) \chi(|x_{ij}|),
	\]
	and compute its derivative:
	\begin{equation}\label{e:Gder}
	\ddt \cG = - \frac{1}{N^2} \sum_{i,j} |v_{ij}|^2 \one_{|d_{ij}|<r_0}  \chi(|x_{ij}|) + \cR_1 + \cR_2 + \cR_3,
	\end{equation}
	where
	\[
	\begin{split}
	\cR_1 & =  \frac{2}{N^3} \sum_{i,j,k} \frac{v_{ij}}{|v_{ij}|} \cdot v_{ki} \phi(x_{ik})   \psi(d_{ij}) \chi(|x_{ij}|), \\
	\cR_2 & = -\frac{2}{N^3} \sum_{i,j,k} x_{ij} \cdot \left( \Id - \frac{v_{ij} \otimes v_{ij}}{|v_{ij}|^2} \right) v_{ki} \phi(x_{ik}) \one_{|d_{ij}|<r_0}   \chi(|x_{ij}|) \\
	\cR_3 & =  \frac{1}{N^2} \sum_{i,j} |v_{ij}| \psi(d_{ij}) \chi'(|x_{ij}|) \frac{x_{ij}}{|x_{ij}|} \cdot v_{ij}.
	\end{split}
	\]
	As seen from the first gain term in \eqref{e:Gder}, the role of $\cG$ is compensate for missing communication in the close range. Let us address this term first. Without loss of generality we can assume that $\Phi$ is a bounded function on $\R^+$, and hence  we have
	\[
	\one_{r<r_0}  + \phi(r) \geq c \Phi(r),
	\]
	for all $r >0$ and some $c>0$.  Using that $\one_{|d_{ij}|\geq r_0} \leq \one_{|x_{ij}|\geq r_0}$ we obtain
	\[
	\begin{split}
	&- \frac{1}{N^2} \sum_{i,j} |v_{ij}|^2 \one_{|d_{ij}|<r_0}  \chi(x_{ij})  \leq   - \frac{1}{N^2} \sum_{i,j} |v_{ij}|^2  \one_{|x_{ij}|<r_0}  \\
	&= - \frac{1}{N^2} \sum_{i,j} |v_{ij}|^2 ( \one_{|x_{ij}|<r_0}  + \phi(x_{ij}) )+  \frac{1}{N^2} \sum_{i,j} |v_{ij}|^2  \phi(x_{ij}) \\
	& \leq - c \Phi(\cD) \cV_2 + \cI_2,
	\end{split}
	\]
	where $\cD = \cD(t)$ is the diameter of the flock.

	Let us proceed to the error terms.  By construction of the functions,
	\[
	|\cR_1|, |\cR_2| \lesssim \cI_1,
	\]
	and
	\[
	\cR_3 \leq  \frac{1}{N^2} \sum_{i,j} |v_{ij}|^2 | \chi'(x_{ij})|  \leq \frac{1}{N^2} \sum_{i,j} |v_{ij}|^2 \one_{r_0 < |x_{ij}| < 2r_0} \lesssim  \frac{1}{N^2} \sum_{i,j} |v_{ij}|^2  \phi(x_{ij}) = \cI_2.
	\]
	Hence, we obtain, for constants \(a,b,c\) only depending on
        \(\phi\), that
	\[
	\ddt \cG \leq - c \Phi(\cD) \cV_2 + a \cI_2 + b \cI_1.
	\]
	Let us form the following functional
	\[
	\cL = \cG + a \cV_2+ b N \cV_1.
	\]
	Then $\ddt \cL \leq - c \Phi(\cD) \cV_2$, and hence
	\[
	\int_0^\infty \Phi(\cD(t)) \cV_2(t) \dt < \infty.
	\]
	Next, let us note that due to uniform bound on velocity, $ \cD(t) \leq c t + \cD_0$, so
	\begin{equation}\label{e:FV2}
	\int_0^\infty \Phi(c t + \cD_0)  \cV_2(t) \dt< \infty.
	\end{equation}
	Due to non-integrability of $\Phi$, $\cV_2$ cannot stay bounded away from the origin. Hence, due to its monotonicity, $\cV_2 \to 0$.

\end{proof}

\begin{proof}[Proof of \thm{t:multiD} (ii)]
	Here we consider the third order corrector function
	\begin{equation*}
	\cG_3 = \frac{1}{N^2} \sum_{i,j} |v_{ij}|^3  \psi(d_{ij})\chi(|x_{ij}|),
	\end{equation*}
	with $\psi$ and $\chi$ defined in the previous proof.
	We  find
	\begin{equation*}
	\ddt \cG_3 = - \frac{1}{N^2} \sum_{i,j=1}^{N} |v_{ij}|^4 \one_{|d_{ij}|<r_0}  \chi_{ij} + \cR_1 + \cR_2 + \cR_3,
	\end{equation*}
	with
	\begin{align*}
	\cR_1 & =  \frac{6}{N^3} \sum_{i,j,k=1}^{N} |v_{ij}| v_{ij} \cdot
	v_{ki}\, \phi_{ik}\, \psi_{ij} \chi_{ij}, \\
	\cR_2 & = \frac{-2}{N^3} \sum_{i,j,k=1}^{N} |v_{ij}|^2\, x_{ij} \cdot
	\left( \Id - \frac{v_{ij} \otimes v_{ij}}{|v_{ij}|^2} \right)
	v_{ki} \phi_{ki}\,
	\one_{|d_{ij}|<r_0}   \chi_{ij} \\
	\cR_3 & =  \frac{1}{N^2} \sum_{i,j=1}^{N} |v_{ij}|^3 \psi_{ij} \chi'(x_{ij}) \frac{x_{ij}}{|x_{ij}|} \cdot v_{ij}.
	\end{align*}
	The gain term is estimated as before using that we have an a priori uniform bound on all velocities $|v_i(t)| \leq |v(0)|_\infty$:
	\begin{equation}\label{e:gain2}
	- \frac{1}{N^2} \sum_{i,j=1}^{N} |v_{ij}|^4 \one_{|d_{ij}|<r_0}  \chi_{ij} \leq  - c \Phi(\cD) \cV_4 + \cI_2.
	\end{equation}
	Next, again as before,
	\begin{equation*}
	\cR_3 \lesssim  \frac{1}{N^2} \sum_{i,j} |v_{ij}|^3  \phi(x_{ij})  \lesssim \cI_2.
	\end{equation*}
	By Young's inequality we find for \(\epsilon > 0\)
	\begin{equation*}
	\cR_2 \lesssim  \frac{\e}{N^2} \sum_{i,j=1}^{N}
	|v_{ij}|^4 |x_{ij}|^2 \chi_{ij}^2
	+ \frac{|\phi|_\infty}{\e}  \frac{1}{N^2}
	\sum_{i,k=1}^{N} |v_{ki}|^2 \phi_{ki}.
	\end{equation*}
	However, \(\chi_{ij}\) is vanishing if \(|x_{ij}| > 2r_0\) so that we
	find
	\begin{equation*}
	\begin{split}
	\cR_2&  \lesssim \frac{\e}{N^2} \sum_{i,j=1}^{N}
	|v_{ij}|^4 \one_{|x_{ij}| \le 2r_0}
	+  \frac{1}{\e N^2}
	\sum_{i,k=1}^{N} |v_{ki}|^2 \phi_{ki}\\
	& \leq \frac{\e}{N^2} \sum_{i,j=1}^{N}
	|v_{ij}|^4 \one_{ |x_{ij}| \leq r_0}+ \frac{\e}{N^2} \sum_{i,j=1}^{N}
	|v_{ij}|^4 \one_{r_0< |x_{ij}| < 2r_0}
	+  \frac{1}{\e N^2}
	\sum_{i,k=1}^{N} |v_{ki}|^2 \phi_{ki}
	\end{split}
	\end{equation*}
	By choosing $\e$ small enough the first sum is absorbed into the gain term \eqref{e:gain2}. The second sum is dominated by $\cI_2$, and so is the last third sum.

	The term \(\cR_1\) can be estimated in exact same manner. Thus,
	\begin{equation*}
	\ddt \cG_3
	\le - c \Phi(\cD) \cV_4 + a \cI_2.
	\end{equation*}
	With the help of the functional $\cL = \cG_3 + a \cV_2$ we conclude that
	\begin{equation}\label{e:FV4}
	\int_0^\infty \Phi(\cD(t)) \, \cV_4(t) \dt < \infty,
	\end{equation}
	with the integral bound being independent of $N$.
	The rest of the argument proceeds as before.
\end{proof}

The proof of  \thm{t:multiD} (iii) repeats that of part (ii) line by line with the use of continuous quantities throughout:
\[
d_{\a\b} =  - x_{\a\b} \cdot \frac{u_{\a\b}}{|u_{\a\b}|}, \quad 	\cG_3 = \int_{\R^{2d}} |u_{\a\b}|^3  \psi(d_{\a\b})\chi(|x_{\a\b}|) \dm_0(\a,\b), \quad \text{etc}.
\]

\begin{remark}\label{r:ratesopen}
	In either of the results above we can actually extract a specific rate of alignment if we make an explicit assumption about the tail of the kernel. Thus, if
	\[
	\phi(r) \sim \frac{1}{r^\b}, \quad \forall r>r_0,
	\]
	and $\b\leq 1$, then from either \eqref{e:FV2} or \eqref{e:FV4},
	\[
	\int_1^\infty \frac{1}{t^{\b}} \, \cV(t) \dt < \infty,
	\]
	where $\cV$ is the corresponding functional. So, if $\b=1$, then
	there exists an $A>0$ such that for any $T>1$ there exists a $t\in [T,T^A]$ such that
	\begin{equation}\label{e:Vlog}
	\cV(t) < \frac{1}{\ln t}.
	\end{equation}
	Since $\ln t$ is proportional to $\ln T$ for all $t\in [T,T^A]$ this proves  \eqref{e:Vlog} for all large times. If $\b<1$, then we argue that for some large $A>0$ and all $T>0$ we find $t\in [T,AT]$ such that $\cV(t) \leq \frac{1}{t^{1-\b}}$. But the latter is comparable for all values of $t\in [T,AT]$. Thus we obtain the power rate of
	\begin{equation}
	\cV(t) \leq \frac{1}{t^{1-\b}}.
	\end{equation}
\end{remark}

\section{Alignment on $\T$}

Let us now turn to the case of a 1D torus, another environment where our method can be successfully applied. First, we address the discrete systems.

\subsection{Discrete systems}

\begin{proof}[Proof of \thm{t:T} (i)] Since the kernel is non-trivial, it will satisfy
	\begin{equation}\label{e:lowI}
		 \phi \geq \l \one_I,
	\end{equation}
	for some subinterval $I \ss (-\pi,\pi)$. For the sake of definiteness, we will run the proof for the centrally located interval $I = [-r_0,r_0]$, and will explain how to modify it to suit the general case.

	First, we present a quick dynamical argument that sets the intuition behind the result. By the Galilean invariance we can assume that the momentum vanishes, $\sum_{i=1}^{N} v_i = 0$. Since $\int_0^\infty \cI_1 \ds < \infty$ for a fixed $\e>0$ there is a time $T$ such that
	\[
	\int_{T}^\infty \cI_1(s) \ds < \e.
	\]
This immediately implies that velocity variations from that time on remain small:
\[
| v_i(t+T) - v_i(T)| \leq \e.
\]
Let  $U$ be the maximal velocity at time $T$, and assume it is large: $U>2\e$. Let $v_i(T) = U$ for some $i$. Noting that $v_i(t+T) > U/2$ for all $t>0$ we conclude
\[
x_i(t+T) > x_i(T) + t U/2 = x_i(T) + 4\pi,
\]
for $t=8\pi/U$.  At the same time, since the total momentum is zero, we can find $j$ with $v_j(T)<0$, hence
\[
 x_j(t+T) < x_j(T) + t \e < x_j(T) + \e 8\pi/U < x_j(T)  + \pi,
\]
provided $U> 8\pi \e$. So, unless $U\leq 8\pi \e$, during this time the two agents will collide, which is a contradiction in the singular case. In the smooth case, we argue that the agents will remain $r_0$-close on a time span $t''-t' = r_0/U$. However, in this case
\[
\l \int_{t'}^{t''} |v_i(t) - v_j(t)| \dt \leq \int_{t'}^{t''} \cI_1(t) \dt < \e.
\]
This implies that at some time $t \in [t',t'']$, $|v_i(t) - v_j(t)| \lesssim \e$, which brings us to  $U\lesssim \e$.   In either case we arrive at the same conclusion  $U\lesssim \e$.

This argument does not give a good quantitative estimate on the rate of decay. Instead we go back to the basic energy laws \eqref{e:Vp} - \eqref{e:V2} and construct a corrector functional $\cG$ which serves to compensate for the missing non-local interactions.

First, we define a periodic analogue of the directed distance:
\[
d_{ij}(t) = - x_{ij}\, \sign(v_{ij}) \mod 2\pi\replaced{,}{.}
\]
where $x_i,x_j \in [0,2\pi)$ are viewed on the same coordinate chart. The distance picks up the length of the arch between $x_i$ and $x_j$ which dynamically contracts at time $t$ under the evolution of the two agents.  The distance clearly undergoes jump discontinuities at $x_i = x_j$ and $v_i = v_j$. Otherwise, we have
\begin{equation}\label{e:dd}
\frac{\dd}{\dd t} d_{ij} = - |v_{ij}|.
\end{equation}
Next we define an auxiliary communication kernel $\psi \geq 0$ as follows
\[
\psi(x) =
\begin{dcases}
-x + r_0, & \quad -r_0 \leq x \leq r_0 \\
\frac{r_0}{\pi-r_0} x - \frac{r_0^2}{\pi-r_0}, & \quad r_0< x<2\pi-r_0,
\end{dcases}
\]
extended periodically on the line. With this we define the corrector
\begin{equation*}
\cG(t) = \frac{1}{N^2} \sum_{i,j=1}^{N} |v_{ij}| \, \psi(d_{ij}).
\end{equation*}
Let us observe the formula for the derivative of $\cG$:
\[
\ddt \cG =  -\frac{1}{N^2} \sum_{i,j=1}^{N} |v_{ij}|^2 \psi'(d_{ij})
+ \frac{2}{N^3} \sum_{i,j=1}^{N} \psi(d_{ij}) \sign(v_{ij})
\sum_{k=1}^{N}
v_{ki}  \phi_{ki}.
\]
The formula can be justified classically, at those times when there is not jump, i.e. $x_i \neq x_j$ and $v_i \neq v_j$, due to \eqref{e:dd}. When two agents pass each other $x_i = x_j$ we use periodicity of $\psi$, and when $v_{ij} = 0$, the pre-factor $|v_{ij}|$ vanishes.

From this we continue
	\begin{equation*}
	\frac{\dd}{\dd t} \cG(t)=  \frac{1}{N^2} \sum_{i,j=1}^{N}
	|v_{ij}|^2\, \one_{|x_{ij}| \leq r_0}
	- \frac{r_0}{\pi-r_0} \frac{1}{N^2} \sum_{i,j=1}^{N}
	|v_{ij}|^2\, \one_{|x_{ij}| \geq r_0}+ \cR,
	\end{equation*}
where
	\begin{equation}\label{e:Rest}
	\cR = \frac{2}{N^3} \sum_{i,j,k=1}^{N} \psi(d_{ij}) \sign(v_{ij})	v_{ki}  \phi_{ki} \leq c \cI_1.
	\end{equation}
So, we obtain
\[
	\frac{\dd}{\dd t} \cG(t) \leq  \frac{\pi}{c_0(\pi-r_0)} \cI_2  - \frac{r_0}{\pi-r_0} \cV_2	 + c \cI_1 = a \cI_2 - b \cV_2 + c \cI_1.
\]
With this we can form another Lyapunov functional:
\[
\cL = \cG + \frac{c N}{2} \cV_1 + b t \cV_2 + a  \cV_2,
\]
so that $\ddt \cL \leq 0$.  This implies the conclusion of the theorem immediately.

A modification of the proof under more general lower bound \eqref{e:lowI} is only required in the definition of $\psi$, where we place the negative linear slope exactly over $I$, and the rest reconnects with positive slope.
\end{proof}

\begin{proof}[Proof of \thm{t:T} (ii)] The previous proof is repeated  up to the remainder estimate \eqref{e:Rest} which  we will replace using the collision potential \eqref{e:C} in order to avoid the use of $N$-dependent $\cI_1$. Symmetrizing over $i,k$, we find
	\begin{equation*}
	\cR = \frac{1}{N^3}
 \sum_{i,j,k=1}^{N} ( \psi(d_{ij}) \sign(v_{ij}) - \psi(d_{kj}) \sign(v_{kj}))	v_{ki}  \phi_{ki}.
	\end{equation*}
	If \(v_i \ge v_j \ge v_k\) or \(v_i \le v_j \le v_k\), then the
	summand is negative, and so we can drop it. Hence,
	\begin{equation*}
	\begin{split}
	\cR &\le \frac{1}{N^3}
	\sum_{i,j,k=1}^{N} \Big( \psi(d_{ij}) \sign(v_{ij}) - \psi(d_{kj}) \sign(v_{kj})\Big)	v_{ki}  \phi_{ki}\,
	\one_{v_j > \max(v_i,v_k)}\\
	&\quad+\frac{1}{N^3}
	\sum_{i,j,k=1}^{N} \Big( \psi(d_{ij}) \sign(v_{ij}) - \psi(d_{kj}) \sign(v_{kj})\Big)	v_{ki}  \phi_{ki}\,
	\one_{v_j < \min(v_i,v_k)}\\
	& = \frac{1}{N^3}
	\sum_{i,j,k=1}^{N} \Big( \psi(d_{kj}) - \psi(d_{ij})  \Big)	v_{ki}  \phi_{ki}\,
	\one_{v_j > \max(v_i,v_k)}\\
	&\quad+\frac{1}{N^3}
	\sum_{i,j,k=1}^{N} \Big( \psi(d_{ij}) - \psi(d_{kj}) \Big)	v_{ki}  \phi_{ki}\,
	\one_{v_j < \min(v_i,v_k)}.
	\end{split}
	\end{equation*}
	In the considered cases \(v_j > \max(v_i,v_k)\) and
	\(v_j < \min(v_i,v_k)\), we see that \(v_i-v_j\) and \(v_k-v_j\) have
	the same sign so that \(d_{ij}\) and \(d_{kj}\) are computed in the
	same direction. Hence we find by the Lipschitz continuity of
	\(\psi\) and by the triangle inequality that
	\(|\psi(d_{ij}) - \psi(d_{kj})| \le C |x_i-x_k|\). Therefore,
	\begin{equation*}
	\cR \le \frac{C}{N^3}
	\sum_{i,j,k=1}^{N}	|x_{ik}|v_{ik}\phi_{ki} =  \frac{C}{N^2}
	\sum_{i,k=1}^{N}	|x_{ik}|v_{ik}\phi_{ki}  \leq  \frac1t \frac{C}{b N^2}
	\sum_{i,k=1}^{N}	|x_{ik}|^2 \phi_{ki} + t \frac{b}{N^2}
	\sum_{i,k=1}^{N}	v_{ik}^2\phi_{ki} .
	\end{equation*}
Let us 	proceed now under the assumptions of (ii). Here we obtain
\[
\cR  \leq \frac{c}{t} + b t \cI_2.
\]
Then the corrector equation becomes
\[
\frac{\dd}{\dd t} \cG(t) \leq  a \cI_2 + b( t \cI_2 -  \cV_2 ) + \frac{c}{t}.
\]
With this we can form another functional:
\[
\cL = \cG + b t \cV_2 + a \cV_2,
\]
satisfying $\ddt \cL \leq  \frac{c}{t}$. Hence, $\cL(t) \lesssim \ln t$, and the resulting bound is as claimed.

\end{proof}

\begin{proof}[Proof of \thm{t:T} (iii)] For part  (iii) we use the collision potential:
\[
\cR \leq C \left(\frac{1}{N^2}
\sum_{i,k=1}^{N}	|x_{ik}|^{2-\b} |x_{ik}|^\b \phi_{ki} \right)^{1/2}\sqrt{ \cI_2} \lesssim \sqrt{ \cI_2} \sqrt{\cC}.
\]
Recalling  \eqref{e:Cbound},
\[
\cR \leq c_1\sqrt{ \cI_2(t)} + c_2 \sqrt{ \cI_2(t)} \int_0^t \sqrt{\cI_2(s)} \ds.
\]
We can replace as before
\[
 c_1\sqrt{ \cI_2(t)} \leq \frac{c_3}{t} + b t \cI_2(t),
 \]
 obtaining
\[
\frac{\dd}{\dd t} \cG(t) \leq  a \cI_2 + b( t \cI_2 -  \cV_2 ) + \frac{c_3}{t} + c_2 \sqrt{ \cI_2(t)} \int_0^t \sqrt{\cI_2(s)} \ds.
\]
With $\cL$ defined as before,
\[
\ddt \cL \lesssim  \frac{c_3}{t} + c_2 \sqrt{ \cI_2(t)} \int_0^t \sqrt{\cI_2(s)} \ds.
\]
Integrating,
\[
\cL(T) \lesssim \cL(0) + \ln T + \left( \int_0^T \sqrt{\cI_2(s)} \ds \right)^2.
\]
Hence,
\[
\cV_2(T) \leq \frac1T \cL(T) \lesssim \frac{\ln T}{T} + \frac1T \left( \int_0^T \sqrt{\cI_2(s)} \ds \right)^2.
\]
The right hand side obviously tends to zero which can be seen by splitting the integral into $(0,T')$ and $(T',T)$, where $T'$ is large.
\end{proof}

\subsection{Hydrodynamic systems}
In this section we establish hydrodynamic  analogues of \thm{t:T} part (ii), and part (iii)  with $2<\b<3$,  and prove \thm{t:1DTcont}.

\begin{proof}[Proof of  \thm{t:T} part (ii), and part (iii)]
Proceeding as in the discrete case we define the directed distance
\[
d_{\a\b} (t) = (x(\a,t) - x(\b,t))\, \sign(u(\b,t) - u(\a,t)) \mod 2\pi,
\]
and the corrector with $\psi$ as before:
\[
\cG = \int_{\T^2} |u_{\a\b}| \psi(d_{\a\b}) \dm_0(\a,\b).
\]
With the same justification in case of jumps, we calculate the derivative of $\cG$:
\[
\begin{split}
\ddt \cG  & = -  \int_{\T^2} |u_{\a\b}|^2 \psi'(d_{\a\b}) \dm_0(\a,\b)+  \int_{\T^3} \sign(u_{\a\b}) \psi(d_{\a\b}) \phi_{\a\g} u_{\g\a} \dm_0(\a,\b,\g) \\
& \leq a\cI_2 - b \cV_2 + \cR.
\end{split}
\]
Here,
\[
\begin{split}
\cR & =  \int_{\T^3} \sign(u_{\a\b}) \psi(d_{\a\b}) \phi_{\a\g} u_{\g\a} \dm_0(\a,\b,\g) \\
& = \frac12  \int_{\T^3} [\sign(u_{\a\b}) \psi(d_{\a\b}) - \sign(u_{\g\b}) \psi(d_{\g\b})] \phi_{\a\g} u_{\g\a} \dm_0(\a,\b,\g) \\
& \leq  \int_{\T^3} (\psi(d_{\a\b}) -  \psi(d_{\g\b})) \phi_{\a\g} u_{\g\a} \one_{u(\b) < \min\{u(\a),u(\g)\}}  \dm_0(\a,\b,\g)\\
&+ \int_{\T^3} ( \psi(d_{\g\b}) - \psi(d_{\a\b}) ) \phi_{\a\g} u_{\g\a} \one_{u(\b) > \max\{u(\a),u(\g)\} } \dm_0(\a,\b,\g)\\
& \leq  \int_{\T^2} |x_{\a\g}| | u_{\g\a} |\phi_{\a\g} \dm_0(\a,\g).
\end{split}
\]
In case (ii) we then obtain
\[
\cR \leq \frac{c}{t} + b t \cI_2,
\]
and the proof ends as in the discrete settings. In case (iii) we consider the collision potential
\[
\cC = \int_{\T^2} \frac{   \dm_0(\a,\b) }{(|x_{\a\b}|\wedge r_0)^{\b-2}} .
\]
Note that it is well-posed for $\b<3$ (in view also of the fact that the density is bounded for regular solutions).  Computation similar to the discrete case proves \eqref{e:Cbound}, and from this point on the proof is exactly the same.
\end{proof}

Next, we prove the $L^\infty$-based result,  \thm{t:1DTcont}. Heuristically, the alignment still holds in spite of all the lacking  assumptions because in regions where the density is non-negligible the alignment term works faster than the transport to avoid agent  collisions. At the same time in regions where the density is thin, the equation acts as Burgers. So, to avoid a blowup it must have low velocity variations in those regions as well.  Let us note that technically the theorem works even if $\phi = 0$, although in this case it is trivial -- the only global smooth solution to Burgers on $\T$ is a constant one. Positivity of $\phi$ makes it possible to have global smooth solutions as discussed in \cite{CCTT2016}.

\begin{proof}[Proof of \thm{t:1DTcont}] By the Galilean invariance we can assume throughout that $\bar{v}=0$.	As a consequence of the energy equality and \eqref{e:V2decay} we obtain
	\[
	\int_T^\infty \int_{\T^2} \phi_{\a \b} |u(\a,t) - u(\b,t) |^2 \dm_0(\a,\b) \dt \leq  C \frac{\ln T}{T}: = \e.
	\]
	Denote
	\[
	F(\a,T) = \int_{T}^\infty  \int_{\T} \phi_{\a \b} |u(\a,t) - u(\b,t) |^2 \dm_0(\b) \dt.
	\]
	So, we have
	\[
\int_\T F(\a,T) \dm_0(\a) \leq  \e.
\]
Let us fix another small parameter $\d>0$ and define the ``good set":
	\[
	G_\delta(T) = \{\alpha : F(\alpha,T) \le \delta \}.
	\]
	We denote by $G^c_\delta$ the complement of $G_\d$, so that $m_0(G^c_\delta ) = M -m_0(G_\delta)$ (recall that $M$ is the total mass of the flock). By the Chebychev inequality,
	\begin{equation}\label{e:CF}
	m_{0}(G^c_\delta) <\frac{\e}{\d}.
	\end{equation}
Thus, the good set occupies almost all of the domain provided $\e \ll \d$.   We now proceed by proving that alignment occurs first on the good set identified above, and then on the rest of the torus later in time within a controlled time scale.

\begin{lemma}[Alignment on the good set]\label{l:onG}
	We have
	\begin{equation*}
	\sup_{\a_1,\a_2 \in G_\d(T),\, t \geq T} |u(\alpha_1,t) - u(\alpha_2,t)| \lesssim \delta^{2/3}.
	\end{equation*}
\end{lemma}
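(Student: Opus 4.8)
The plan is to leverage the smallness of $F(\a,T)$ on the good set in two stages: first to show that every label in $G_\d(T)$ carries an almost time-constant velocity, and then to use the recurrent one-dimensional geometry of $\T$ to force any two such labels into the communication range unless their velocities already agree up to the stated order.

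The first step converts $F(\a,T)\le\d$ into control of $\dot u$. Differentiating along characteristics, $\dot u(\a,t)=\int_\T\phi_{\a\b}\,(u(\b,t)-u(\a,t))\,\dm_0(\b)$, and Cauchy--Schwarz in the measure $\phi_{\a\b}\dm_0(\b)$ gives $|\dot u(\a,t)|^2\le\|\phi\|_\infty M\int_\T\phi_{\a\b}|u_{\a\b}|^2\dm_0(\b)$. Integrating in time, for every $\a\in G_\d(T)$,
\[
\int_T^\infty|\dot u(\a,t)|^2\,\dt\le\|\phi\|_\infty M\,F(\a,T)\le C\d .
\]
By Cauchy--Schwarz in $t$ this yields $|u(\a,t)-u(\a,t')|\le (C\d\,|t-t'|)^{1/2}$, so good labels change velocity slowly on a $\d$-dependent time scale.

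In the second step I fix $\a_1,\a_2\in G_\d(T)$ and set $A=\sup_{t\ge T}|u_{\a_1\a_2}(t)|$, nearly attained at some $t_0\ge T$. By the slow-variation estimate there is a window $W=[t_0,t_0+s_*]$ with $s_*\sim A^2/\d$ on which $|u_{\a_1\a_2}|\ge A/2$. I then invoke the directed-distance identity \eqref{e:dd}, $\ddt d_{\a_1\a_2}=-|u_{\a_1\a_2}|$: over $W$ the contracting arc shrinks by at least $\tfrac12 A\,s_*\sim A^3/\d$, so as soon as $A$ exceeds an appropriate power of $\d$ this drop exceeds $2\pi$ and the two characteristics are forced within the support interval $I$ of \eqref{e:lowI}, where $\phi\ge\l$, on a subinterval of $W$; there $|u_{\a_1\a_2}|\ge A/2$ produces a definite amount of communication.

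The main obstacle is turning this forced encounter into the budget $F\le\d$, since $F$ is weighted by $\dm_0(\b)$ while a single pair carries no mass and vacuum is allowed, so the density around $\a_2$ may vanish. The remedy I propose is to register passages against genuine bulk mass rather than against $\a_2$ alone: because $\bar v=0$ and $m_0(G_\d^c)<\e/\d$ is tiny, the mean velocity of $G_\d(T)$ is $O(\e/\d)$, and a velocity spread of size $A$ forces a fixed amount of the mass of $G_\d(T)$ to differ from $u(\a_1,\cdot)$ by $\gtrsim A$. As $\a_1$ sweeps at nearly constant speed it crosses this mass through the range $I$, and accounting for the duration and number of such crossings over $W$ produces a lower bound for $F(\a_1,T)$ that is a positive power of $A$ divided by a power of $\d$; comparing it with $F(\a_1,T)\le\d$ gives the stated bound $A\lesssim\d^{2/3}$. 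Getting the density bookkeeping right, and quantifying the encounter sharply enough to produce exactly this exponent, is the delicate part of the argument.
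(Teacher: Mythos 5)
Your first stage and your encounter setup coincide with the paper's own proof: the Cauchy--Schwarz step converting $F(\a,T)\le\d$ into slow variation of $u(\a,\cdot)$ is precisely the paper's \eqref{e:tT}, and the window on which a velocity gap $A$ persists, forcing a relative displacement that wraps around $\T$, is exactly the paper's configuration. The genuine gap is in how you try to close the argument. You attempt to convert the forced encounter into communication charged against the budget $F(\a_1,T)\le\d$, correctly observe that this fails for a single massless pair, and then propose a bulk-mass repair which you leave incomplete --- and which does not work as sketched. First, the claim that a velocity spread $A$ forces a \emph{fixed} amount of good mass to differ from $u(\a_1,\cdot)$ by $\gtrsim A$ is false when vacuum is allowed: with bounded velocities and zero momentum one can place mass $\sim A$ at a velocity far below $u(\a_1)$ and the rest at $u(\a_1)$, so only mass \emph{proportional to} $A$ is forced. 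Second, carrying the bookkeeping through (mass $\sim A$ in the slow set, $\sim A^3/\d$ laps inside the window, time per crossing $\gtrsim r_0$ since relative speeds are bounded, contribution $\sim A^2$ per unit mass per crossing) yields $F(\a_1,T)\gtrsim A^6/\d$, hence only $A\lesssim\d^{1/3}$; even the ``fixed mass'' variant would give $A\lesssim\d^{2/5}$. Either way this route cannot produce the exponent $\d^{2/3}$ claimed in the lemma, so the proposal, even if completed, does not prove the statement.

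The idea you are missing is that no dissipation accounting is needed at all: the paper's contradiction is purely kinematic. In Lagrangian variables $u(\a,t)=v(x(\a,t),t)$, so two characteristics of a classical, single-valued velocity field that occupy the same point of $\T$ at the same instant necessarily carry the same velocity. If the relative displacement of the pair increases by $2\pi$ while $|u_{\a_1\a_2}|\ge A/2>0$, the two characteristics cross (the paper's ``they collapse''), which is impossible. Hence the wrap-around time $\sim 1/A$ must exceed the length of the window on which the gap persists, and this single inequality is the whole proof: with the paper's form of \eqref{e:tT} the window has length $\sim A^2/\d^2$, so $1/A\gtrsim A^2/\d^2$, i.e.\ $A\lesssim\d^{2/3}$. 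One caveat worth flagging: your Cauchy--Schwarz actually yields $|u(\a,t)-u(\a,T)|\lesssim\sqrt{\d(t-T)}$, i.e.\ a $\sqrt{\d}$ prefactor, whereas \eqref{e:tT} asserts the stronger $\d\sqrt{t-T}$; with the prefactor your derivation gives, the crossing argument produces $A\lesssim\d^{1/3}$, so the stated exponent $2/3$ hinges on the paper's stronger form of \eqref{e:tT}, not on the bound your first step establishes.
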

\begin{proof} Note that it suffices to establish alignment at time $T$ only. This simply follows from monotonicity of the our $F$-function:
	\[
	F(\a,t) \leq F(\a,T), \quad t>T,
	\]
which implies that the good sets are increasing in time, $G_\d(T) \ss
G_\d(t)$.

 Integrating the equation
	\[
	\ddt u(\a,t) =  \int_{\T} \phi_{\a\b} u_{\a\b} \dm_0(\b)
	\]
over $[T,t]$ for any  $\a \in G_\d$ we obtain
	\begin{equation}\label{e:tT}
	| u(\a,t) - u(\a,T) |  \leq \int_{T}^t \int_{\T} \phi_{\a\b}| u_{\a\b}| \dm_0(\b)  \lesssim \d \sqrt{t-T}.
	\end{equation}
Suppose that for some \(\alpha_1,\alpha_2 \in G_{\delta}\) we have
\[
u(\alpha_1,T) - u(\alpha_2,T) > U,
\]
where $U$ to be determined later. Then 	in view of \eqref{e:tT},
\[
u(\alpha_1,t) - u(\alpha_2,t) > \frac{U}{2},
\]
as long as
	\begin{equation}\label{e:tTabove}
	t- T \lesssim \frac{U^2}{\d^2}.
	\end{equation}
During this time the corresponding characteristics will undergo  a significant relative displacement
	\[
	x(\a_1,t) - x(\a_2,t) \geq x(\a_1,T) - x(\a_2,T) + \frac12 U(t- T) \mod 2\pi,
	\]
where  $\frac12 U(t- T)> 4\pi$ as long as $t- T \gtrsim \frac{1}{U}$. If this is allowed to happen, then the characteristics will find themselves at the separation  distance equal $2 \pi = 0$, so they collapse. We necessarily obtain
	\begin{equation}\label{e:Ut}
\frac1U \gtrsim \frac{U^2}{\d^2},
	\end{equation}
	which gives $U \lesssim \d^{2/3}$ as claimed.
\end{proof}

Next step is to show that the solution aligns completely at a not too distant later time $t>T$.

	\begin{lemma}[Alignment outside the good set]\label{l:outG}
	For all $t\gtrsim T + \frac{1}{\d^{1/3} + (\e/\d)^{1/2}}$ we have
		\begin{equation*}
	\sup_{\a\in \T,\g \in G_\d(T)} |u(\alpha,t) - u(\g,t)| \lesssim \d^{1/3} + (\e/\d)^{1/2}.
	\end{equation*}
\end{lemma}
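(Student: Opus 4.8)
The plan is to show that every agent's velocity relaxes to that of a fixed reference label $\gamma_0\in G_\delta(T)$ at the universal rate $1/(t-T)$, down to a floor of size $(\epsilon/\delta)^{1/2}$, and then to read the statement off at $t-T\sim(\delta^{1/3}+(\epsilon/\delta)^{1/2})^{-1}$. Writing $r_\alpha(t)=u(\alpha,t)-u(\gamma_0,t)$, the triangle inequality through $\gamma_0$ together with the good-set estimate $|u(\gamma,t)-u(\gamma_0,t)|\lesssim\delta^{2/3}$ from \lem{l:onG} reduces the entire claim to the single bound $\sup_\alpha|r_\alpha(t)|\lesssim\delta^{1/3}+(\epsilon/\delta)^{1/2}$ at the indicated time. (For context, splitting $0=\int_\T u\dm_0$ over $G_\delta$ and $G_\delta^c$ and using $m_0(G_\delta^c)<\epsilon/\delta$ and the maximum principle $|u|\le V_0:=|v(0)|_\infty$ also shows $|u(\gamma_0,t)|\lesssim\delta^{2/3}+\epsilon/\delta$, so the reference is nearly at rest.)

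The mechanism behind the relaxation is order preservation of the circle flow. For a smooth velocity field the time-$t$ map $\alpha\mapsto x(\alpha,t)$ is an orientation-preserving homeomorphism of $\T$; lifting the labels so that $\gamma_0<\alpha<\gamma_0+2\pi$, the signed separation $s_\alpha(t)=\tilde x(\alpha,t)-\tilde x(\gamma_0,t)$ is therefore trapped in $(0,2\pi)$ for all $t$. Since $\dot s_\alpha=r_\alpha$, this forces $\big|\int_{t_1}^{t_2}r_\alpha(t)\dt\big|<2\pi$ on every interval: an agent cannot sustain a velocity gap to $\gamma_0$ without overtaking it, which is forbidden. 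Quantitatively, were $r_\alpha\ge W/2$ to persist for time $4\pi/W$ the separation would grow by more than $2\pi$; hence any excess $W$ is halved within time $\lesssim 1/W$, and iterating the halving from the a priori bound $|r_\alpha|\le 2V_0$ gives
\[
\sup_\alpha|r_\alpha(t)|\ \lesssim\ \frac{1}{t-T}\ +\ \text{(floor)},\qquad t\ge T .
\]

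The floor is produced by the small but possibly fast bad set. When $r_\alpha$ is already small, $\alpha$ and $\gamma_0$ both sit in the aligned bulk, so in $\dot r_\alpha$ the bulk contributions nearly cancel and only the residual force of the bad mass, of size $2|\phi|_\infty V_0\,m_0(G_\delta^c)\lesssim\epsilon/\delta$, survives. On sweep-averaged scales the extremal excess $h=\max_\alpha r_\alpha$ thus obeys $\tfrac{\dd}{\dd t}h\lesssim -c\,h^2+A\,\epsilon/\delta$, which simultaneously yields the $1/(t-T)$ decay and the equilibrium floor $h\sim(\epsilon/\delta)^{1/2}$. Evaluating at $t-T\gtrsim(\delta^{1/3}+(\epsilon/\delta)^{1/2})^{-1}$ makes the first term $\lesssim\delta^{1/3}+(\epsilon/\delta)^{1/2}$; since the floor $(\epsilon/\delta)^{1/2}$ and the reference spread $\delta^{2/3}$ are both dominated by $\delta^{1/3}+(\epsilon/\delta)^{1/2}$, the claim follows on passing back through $\gamma_0$.

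The main obstacle is upgrading the per-pair overtaking bound to a genuine supremum. The halving rate was derived for a single characteristic measured against $\gamma_0$, whereas the extremal agent changes in time and an agent that has slowed could in principle be re-accelerated by the bad mass. The clean device for locking in the gains is the amplitude monotonicity recorded in the introduction — $\max_\alpha u-\min_\alpha u$ is non-increasing — which forbids rebound once the spread has contracted; reconciling the order-preservation halving, which controls each fixed $\alpha$, with this global monotonicity, and carrying the $O(\epsilon/\delta)$ residual through the moving maximiser so that it lands below $(\epsilon/\delta)^{1/2}$, is the delicate part. The conceptual core — that a persistent velocity gap would force a forbidden crossing of characteristics on $\T$, whence the quadratic relaxation rate — is robust.
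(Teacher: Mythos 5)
Your core mechanism — characteristics on $\T$ cannot cross, so a velocity gap that persists for time $\gtrsim 1/W$ forces a forbidden winding of one trajectory past another — is exactly the engine of the paper's proof, and your decomposition of the force on a bad agent into a bulk damping term $(\phi\ast\rho)\,u_{\g\a}$ plus an $O(\d^{2/3}+\e/\d)$ residual is precisely the paper's inequality \eqref{e:ddvboth}. But your assembly of these ingredients has a genuine gap, which you yourself flag as ``the delicate part'': the forward-in-time scheme requires the Riccati-type inequality $\frac{\dd}{\dd t}h\lesssim -c\,h^2+A\,\e/\d$ for the supremum $h(t)=\max_\a r_\a(t)$, and nothing in the proposal proves it. The $-c\,h^2$ term cannot come from the damping coefficient $\phi\ast\rho$, which may vanish wherever the density vanishes (vacuum is allowed, and the kernel is degenerate — that is the whole point of the theorem); it would have to come from the winding constraint, but winding gives only a per-agent \emph{integral} bound $\bigl|\int_{t_1}^{t_2}r_\a\,\dt\bigr|<2\pi$, which says nothing differential about the moving maximiser. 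Your proposed repair via amplitude monotonicity also fails: the halving argument produces, for each fixed $\a$, \emph{some} time in each window at which $r_\a$ is small, but these times are not synchronized across agents, so at no single time is the amplitude $\max u-\min u$ provably small — and without a single-time bound the maximum principle has nothing to lock in. Agents can take turns being fast, keeping $h$ large at every time while each individual winding budget is respected.

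The paper avoids the supremum entirely by running the argument \emph{backward} in time, per pair, at the given time $t$: fix $\a\in\T$, $\g\in G_\d(T)$, suppose $u_{\a\g}(t)=U>0$, and note that while the gap is positive the damping term in \eqref{e:ddvboth} has a favorable sign, so $\ddt u(\a,\cdot)\leq \d^{2/3}+\e/\d$; combined with the good-set time-regularity \eqref{e:tT} for $\g$, this shows the gap was $\geq U/2$ throughout a backward window of length $\sim\min\bigl(U/(\d^{2/3}+\e/\d),\,U^2/\d^2\bigr)$. If $U\gtrsim \d^{1/3}+(\e/\d)^{1/2}$ and $t-T\gtrsim 1/U$, this window exceeds $4\pi/U$, and the winding contradiction kills it — giving the bound for \emph{every} $\a$ at time $t$ in one step, with no iteration, no tracking of a maximiser, and no appeal to amplitude monotonicity. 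In other words, the missing piece in your write-up is not a new estimate but the realization that \eqref{e:ddvboth} should be used as a one-sided ``no fast re-acceleration'' bound on a fixed agent (equivalently, a backward persistence bound), at which point the contradiction closes immediately; with that same one-sided bound your forward halving could also be repaired agent-by-agent, but as written the proposal rests on an unproven sweep-averaged inequality and on a lock-in device that cannot do the job.
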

\begin{proof}
Let $\a \in \T$ and $\g \in G_\d(T)$. Let us write the momentum equation in Lagrangian coordinates as follows
\[
	\begin{split}
	\ddt u(\a,t) &=\int_\T u_{\b\a} \phi_{\a\b} \dm_0(\b) = \int_\T (u_{\b\g} + u_{\g\a}) \phi_{\a\b} \dm_0(\b)  \\
	&=   (\phi \ast \rho)(x(\a,t),t) u_{\g\a} + \int_\T u_{\b\g} \phi_{\a\b} \dm_0(\b).
	\end{split}
\]
The integral term will remain small for all $t\geq  T$, in view \lem{l:onG} and \eqref{e:CF}. Indeed,
	\[
	\begin{split}
	\left| \int_\T u_{\b\g}(t) \phi_{\a\b} \dm_0(\b) \right| & = \left|   \int_{G_\d(T) } u_{\b\g}(t) \phi_{\a\b} \dm_0(\b) \right| + \left|   \int_{G^c_\d(T)} u_{\b\g}(t)  \phi_{\a\b} \dm_0(\b) \right|\\
	& \lesssim \d^{2/3}  + \frac{\e}{\d}.
	\end{split}
	\]
	So,
	\begin{equation}\label{e:ddvboth}
	  (\phi \ast \rho) u_{\g\a} -  \d^{2/3} - \frac{\e}{\d} \leq \ddt u (\a,t) \leq   (\phi \ast \rho) u_{\g\a} + \d^{2/3}  + \frac{\e}{\d}.
	\end{equation}
Let us fix a time $t\gtrsim T + \frac{1}{\d^{1/3} + (\e/\d)^{1/2}}$, and assume that $u_{\a\g}(t) = U >0$, where $U$ is to be determined later. Let us drive the dynamics backwards in time from the moment $t$.  For a time period $[s,t]$, where $T<s<t$, the difference will remain positive $u_{\a\g}(s) >0$. On that time period, the right hand side of \eqref{e:ddvboth} implies
	\[
	\ddt u \leq  \d^{2/3}  + \frac{\e}{\d}
	\]
	and hence,
	\[
	 u(\a,t) - \left( \d^{2/3}  + \frac{\e}{\d}\right)(t-s) \leq u(\a,s).
	\]
	At the same time, by \eqref{e:tT} applied to $\g \in G_\d$, we have
	\[
	|u(\g,t) - u(\g,s)| \leq \d(t-s)^{1/2}.
	\]
So, 	combined with the previous,
\[
U - \left( \d^{2/3}  + \frac{\e}{\d}\right)(t-s) - \d(t-s)^{1/2} =  u_{\a\g}(t) - \left( \d^{2/3}  + \frac{\e}{\d}\right)(t-s) - \d(t-s)^{1/2} \leq u_{\a\g}(s).
 \]
We will have
\[
u_{\a\g}(s) \geq \frac{U}{2},
\]
as long as  $(t-s) \lesssim \frac{U}{ \d^{2/3}  + \frac{\e}{\d}}$ and $(t-s) \lesssim \frac{U^2}{\d^2}$.  The former is more restrictive, unless $U\lesssim \d^{4/3}$, in which case we have acheived our objective.  Arguing as in the proof of \lem{l:onG} we obtain collision backwards in time, provided $(t-s) \sim  1/U$.  This is possible when $U \gtrsim \d^{1/3} + (\e/\d)^{1/2}$ on the time interval $t- T \gtrsim 1/U$, which is true under the assumption.

Arguing from the opposite end, $u_{\a\g}(t) = - U <0$, we obtain the bound from below as well.
\end{proof}

\lem{l:outG} implies the global alignment: 	for all $t\gtrsim T + \frac{1}{\d^{1/3} + (\e/\d)^{1/2}}$
\begin{equation*}
\sup_{\a,\g\in \T} |u(\alpha,t) - u(\g,t)| \lesssim \d^{1/3} + (\e/\d)^{1/2}.
\end{equation*}
Optimizing over $\d$, we pick $\d = \e^{3/5}$, and recalling that $\e = \ln T/ T$, we obtain
\begin{equation*}
\sup_{\a,\g\in \T} |u(\alpha,t) - u(\g,t)| \lesssim	\left(\frac{\ln T}{T}\right)^{1/5},
\end{equation*}
for $t \sim T + \left(\frac{T}{\ln T}\right)^{1/5} \sim T$.  This proves the result.

\end{proof}


\end{document}